\definecolor{rosso}{rgb}{0.85,0,0}
\definecolor{azzurro}{rgb}{0.13, 0.67, 0.8}
\def\an #1{{\color{rosso}#1}}
\def\an #1{#1}
\def\luca #1{{\color{rosso}#1}}
\def\luca #1{#1}
\def\maunew #1{{\color{blue}#1}}
\def\mau #1{#1}
\def\maunew #1{#1}
\def\rev #1{{\color{rosso}#1}}
\def\rev #1{#1}
\def\sub #1{{\color{rosso}#1}}
\def\sub #1{#1}
\def\mod #1{{\color{rosso}#1}}
\def\anlast #1{{\color{rosso}#1}}
\def\anlastr #1{{\color{rosso}#1}}
\def\mod #1{{#1}}
\def\anlast #1{{#1}}
\def\anlastr #1{{#1}}
\newtheorem{theorem}{Theorem}[section]
\newtheorem{remark}[theorem]{Remark}
\newtheorem{proposition}[theorem]{Proposition}
\newtheorem{lemma}[theorem]{Lemma}
\numberwithin{equation}{section}
\let\non\nonumber
\def\Lip{Lip\-schitz}
\def\lhs{left-hand side}
\def\rhs{right-hand side}
\def\multibold #1{\def\arg{#1}%
  \ifx\arg\pto \let\next\relax
  \else
  \def\next{\expandafter
    \def\csname #1#1#1\endcsname{{\bf #1}}%
    \multibold}%
  \fi \next}
\def\pto{.}
\def\multical #1{\def\arg{#1}%
  \ifx\arg\pto \let\next\relax
  \else
  \def\next{\expandafter
    \def\csname cal#1\endcsname{{\cal #1}}%
    \multical}%
  \fi \next}
\def\multimathop #1 {\def\arg{#1}%
  \ifx\arg\pto \let\next\relax
  \else
  \def\next{\expandafter
    \def\csname #1\endcsname{\mathop{\rm #1}\nolimits}%
    \multimathop}%
  \fi \next}
\def\<#1>{\mathopen\langle #1\mathclose\rangle}
\def\norma #1{\left\| #1\right\|}
\newcommand{\vertiii}[1]{{\left\vert\kern-0.25ex\left\vert\kern-0.25ex\left\vert #1 \right\vert\kern-0.25ex\right\vert\kern-0.25ex\right\vert}}
\def\I2 #1{\int_{Q_t}|{#1}|^2}
\def\IT2 #1{\int_{Q_t^T}|{#1}|^2}
\def\IO2 #1{\norma{{#1(t)}}^2}
\def\IOT2 #1{\norma{{#1(T)}}^2}
\def\ov #1{{\overline{#1}}}
\def\iot {\int_0^t}
\def\iO{\int_\Omega}
\def\dt{\partial_t}
\def\dn{\partial_{\bf n}}
\def\checkmmode #1{\relax\ifmmode\hbox{#1}\else{#1}\fi}
\def\erre{{\mathbb{R}}}
\def\Vp{{V^*}}
\def\genspazio #1#2#3#4#5{#1^{#2}(#5,#4;#3)}
\def\spazio #1#2#3{\genspazio {#1}{#2}{#3}T0}
\def\spazios #1#2#3{\genspazio {#1}{#2}{#3}T \sigma}
\def\L {\spazio L}
\def\H {\spazio H}
\def\C #1#2{C^{#1}([0,T];#2)}
\def\Ls {\spazios L}
\def\Lx #1{L^{#1}(\Omega)}
\def\Hx #1{H^{#1}(\Omega)}
\def\b{\beta}	
\def\d{\delta}
\def\s{\sigma}
\def\m{\mu}	
\def\ph{\varphi}
\def\cd{C_{\d}}
\let\lacc\l
\def\l{{\lambda}}
\def\phl{{\ph_\lambda}}
\def\ml{{\m_\lambda}}
\newcommand{\NN}{{\cal N}}
\newcommand{\RR}{{\cal R}}
\newcommand{\ds}{{\rm ds}}
\newcommand\dd {{\delta^h_t}}
\newcommand\emb {{\hookrightarrow}}
\renewcommand{\hat}{\widehat}
\def\Accorpa #1#2 #3 {\gdef #1{\eqref{#2}--\eqref{#3}}%
  \wlog{}\wlog{\string #1 -> #2 - #3}\wlog{}}
\def\dd#1 {{\rm d#1}}
\def\h{\mathbbm{h}}
\def\tre{{{\gamma}}}
\begin{document}

\title{Cahn--Hilliard equations with singular potential, \\ reaction term and pure phase initial datum}
\author{Maurizio Grasselli \footnotemark[1] \and Luca Scarpa \footnotemark[1] \and Andrea Signori \footnotemark[2]}
\date{}
\maketitle

\renewcommand{\thefootnote}{\fnsymbol{footnote}}
\footnotetext[1]{Dipartimento di Ma\-te\-ma\-ti\-ca, Politecnico di Milano, 20133 Milano, Italy ({\tt mau\-ri\-zio.gras\-sel\-li@polimi.it, luca.scarpa@polimi.it}),}
\footnotetext[2]{Dipartimento di Ma\-te\-ma\-ti\-ca, Politecnico di Milano, 20133 Milano, Italy, Alexander von Humboldt Research Fellow ({andrea.signori@polimi.it}).}

\begin{abstract}\noindent
We consider local and nonlocal Cahn--Hilliard equations with constant mobility and singular potentials including, e.g., the Flory--Huggins potential, subject to no-flux (or periodic) boundary conditions. The main goal is to show that the
presence of a suitable class of reaction terms allows to establish the existence of a weak solution to the corresponding initial and boundary value problem even though
the initial condition is a pure state.
This fact was already observed by the authors in a previous contribution devoted to a specific biological model.
In this context, we examine the essential assumptions required for the reaction term to ensure the existence of a weak solution.
Also, we explore the scenario involving the nonlocal Cahn--Hilliard equation and provide some illustrative examples that contextualize within our abstract framework.
\end{abstract}

\noindent {\bf Keywords:} Cahn--Hilliard equation, nonlocal Cahn--Hilliard equation, singular potential, initial pure phase, reaction term, existence of a weak solution.

\vspace{2mm}

\noindent {\bf AMS (MOS) subject classification:}
		35D30, 
	    35K35, 
	    35K86, 
 	    35Q92, 
        92C17. 

\vspace{2mm}


\section{Introduction}
\label{SEC:INTRO}
Phase separation is a relevant phenomenon which appears in a number of different contexts (see, for instance, \cite{DG,PFSSSD} and references therein).
In particular, it has recently become a paradigm in Cell Biology (see, e.g., \cite{Boe,Dol,HWJ}).
Phase separation takes place whenever two (or more) species interact in a way that can be described as a competition between mixing entropy and a demixing effect due to the attraction of particles
of the same species. As a prototypical example, one may think of oil and vinegar. From the mathematical viewpoint, a fairly good description of such phenomenon is given by the so-called Cahn--Hilliard equation (see \cite{CH1,CH2}, see also \cite{Mbook} and its references). In the case of two species, this equation assumes the following form
\begin{equation*}
\dt \ph = \div(m(\ph)\nabla \mu)
\end{equation*}
in a given bounded domain $\Omega \subset \erre^d$, $d \in \{1,2,3\}$ and in a time interval $(0,T)$ for some given $T>0$. Here, $\ph$ represents the (relative) difference between
the concentration of the two species, $m(\cdot)$ is a nonnegative mobility function, and $\mu$ is the so-called chemical potential. The latter corresponds to the functional derivative of the (local) free energy
\begin{align}	
	\label{energy:local}
	{\cal E}^{\rm loc} (\ph(\cdot))
	= \iO |\nabla \ph (\cdot,x)|^2 \dd x
	+ \iO F(\ph(\cdot,x)) \, \dd x,
\end{align}
where a physically relevant example of potential density $F$ is given by the logarithmic (\an{a.k.a.} the Flory\an{--}Huggins) potential (see, e.g., \cite{CMZ} and references therein)
\begin{equation}
  \label{Flog}
  F_{\rm log}(r):=\frac{\vartheta}{2}\left[(1+r)\ln(1+r)+(1-r)\ln(1-r)\right]-\frac{\vartheta_0}{2}r^2,
  \quad r\in(-1,1),
\end{equation}
where  $0<\vartheta<\vartheta_0$.
An alternative choice is a nonlocal free energy of the form (see \cite{GL1,GL2}, see also \cite{AH,GGG1,GGG2,DRST,DST1,DST2} and their references)
\begin{align}	
	\label{energy:nonlocal}
	{\cal E}^{\rm nonloc} (\ph(\cdot))
	=
\frac{1}{4}\int_{\Omega \times\Omega} K(x-y) \vert \varphi (\cdot,x) - \varphi(\cdot,y)\vert^2{\dd x \dd y }
	+ \iO F(\ph(\cdot,x)) \,\dd x,
\end{align}
where $K: \mathbb{R}^d \to \mathbb{R}$ is a sufficiently smooth interaction kernel such that $K(x)=K(-x)$.

Taking constant mobility equal to one, we have the following local Cahn--Hilliard equation
\begin{equation}
\label{LCHE}
\dt \ph = \Delta (- \Delta \ph + F^\prime(\ph)),
\end{equation}
and its nonlocal counterpart
\begin{equation}
\label{NLCHE}
\dt \ph = \Delta (a\ph -K* \ph + F'(\ph)),
\end{equation}
in $Q:=\Omega \times (0,T)$, where
\[
  (K*v)(x):=\int_\Omega K(x-y)v(y)\, \dd y, \quad x\in\Omega\an{,}
\]
for any given measurable function $v:\Omega\to\erre$ and $a=K*1$.

\an{To} ensure mass conservation, the above equations are usually equipped with no-flux boundary conditions on $\Sigma:=\partial\Omega\times(0,T)$ \an{for the chemical potential $\mu$},
even though periodic boundary conditions can also be considered (see, for instance, \cite{GL2}).
In the vast mathematical literature devoted to these equations, in order to prove the existence of a reasonable (say, weak) solution in the case of a singular potential like \eqref{Flog},
it is always assumed that the spatial average of the initial datum must belong to $(-1,1)$ (see, for instance, \cite[Thm.~2.2]{Dett} and \cite[Def.~6.1, Thm.~6.1]{KNP}).
However, it must be noted that, in the case of degenerate mobility, pure phases are allowed to be solutions by a suitable splitting of the chemical potential
(see \cite{EG}, see also \cite{FGG} and references therein for the nonlocal case). In the case of equation \eqref{LCHE}, this fact is less obvious (see \cite[Rem.~4.2]{MZ}).


Here, inspired by a biological model on the formation of condensates (see \cite{GFGN,GFGNrev}
and references therein, see also \cite{Lahaetal}) which has been analyzed in \cite{GSS},
we show that it is indeed possible to establish the existence of a weak solution to
Cahn--Hilliard equations \eqref{LCHE} and \eqref{NLCHE} with $F$ like, e.g., \eqref{Flog}, equipped with no-flux boundary conditions, and subject to a suitable reaction term,
if the initial datum is a pure phase.

More precisely, letting $d\in\{2,3\}$, we consider the following initial and boundary value problems
\begin{alignat}{2}	
	\label{sys:1}
	&  \dt \ph - \Delta \mu  = S(x,\ph,g) \qquad && \text{in $Q$,}
	 \\
	\label{sys:2}
	 & \mu  =  -\Delta \ph + F'(\ph) + g  \qquad && \text{in $Q$,}
	 \\
	\label{sys:3}
	&  \dn \ph
	 =  \dn\mu
	= 0  \qquad && \text{on $\Sigma$,}
	 \\	
	 \label{sys:4}
	&  \ph(0) = -1  \qquad && \text{in $\Omega$,}
\end{alignat}
and
\begin{alignat}{2}	
	\label{sys:non:1}
	&  \dt \ph - \Delta \mu  = S(x,\ph,g) \qquad && \text{in $Q$,}
	 \\
	\label{sys:non:2}
	 & \mu  =  a\ph -K* \ph + F'(\ph) + g  \qquad && \text{in $Q$,}
	 \\
	\label{sys:non:3}
	&   \dn \m
	= 0  \qquad && \text{on $\Sigma$,}
	 \\	
	 \label{sys:non:4}
	&  \ph(0) = -1  \qquad && \text{in $\Omega$,}
\end{alignat}
where $S(x,\ph,g)$ is a reaction term and $g$ is a given source, possibly depending on time.
We incidentally notice that in the above system we abuse a bit of notation to denote the function $S(x,\ph,g)$ by explicitly writing the space variable $x \in \Omega$ which is not specified for all the other variables. As no confusion may arise, that convention will be in force throughout the paper.

The main technical difficulty of the entire work derives from the fact that
we can no longer prove \an{the usual property} that $\mu$ belongs to $L^2(0,T;L^2(\Omega))$, like in \cite{Lam,Mir}
(see also \cite{IM,MR} for the nonlocal case with degenerate mobility),
through an inequality like \cite[Prop.~A.2]{MZ}
(see also \cite[Sec.~5]{GMS09}). This is due to the fact that \an{in our scenario,} we start with a pure phase. However, here the mass is not conserved
because of the reaction term. Thus, under appropriate assumptions, we can show
that the spatial average of $\varphi$ instantly detaches from $-1$ (i.e.\rev{,} the phase becomes
mixed) and to explicitly compute the behavior close to $-1$ for small times.
This is based on {\it ad hoc} qualitative analysis of evolution of the spatial average of $\ph$ \an{given by}
$$
(\ph(t))_\Omega := \frac 1{\vert \Omega\vert} \int_\Omega \ph(x,t)\,\dd x,
$$
where $\vert\Omega\vert$ denotes the $d$\rev{-}Lebesgue measure of $\Omega$, which obeys the Cauchy problem
\begin{equation}
\label{ODEmean}
	y' (t)= (S(\cdot,\ph(t),g(t)))_\Omega \quad \text{for all $t \in [0,T],$}\quad y(0)=-1,
\end{equation}
where $y(t)\rev{:}=(\ph(t))_\Omega$.
To obtain the above relation, we integrate equation \eqref{sys:1} (\eqref{sys:non:1} resp.) over the domain $\Omega$ and then
divide by $\vert\Omega\vert$.

The second step consists in generalizing the above mentioned inequality by explicitly tracking
the behavior of $F^\prime(\ph)$ close to the pure phase. As we shall see, this argument leads to a
local in time $L^p$-control in time of $\mu$ for $p\in (1,2)$ which is enough to recover the existence of a weak solution.
The fact that we start from $-1$ is actually just a matter of choice. We could also start from $1$ by just  taking $-S$ in place of $S$.
This essentially means that we are removing mass instead of adding it.

The analysis of \eqref{ODEmean} imposes some restrictions on $S$ (see below). Nonetheless, we can handle, for instance, two well-known types of Cahn--Hilliard
equations with reaction terms, namely,
a Cahn--Hilliard--Oono type equation (see \cite{GDM}, see also \cite{GGM} for theoretical results)
and the inpainting model proposed by Bertozzi et al. (see \cite{BEG1,BEG2}, see also \cite{CFM} for the case of logarithmic potential and \cite{Jiangetal}
for the nonlocal case).
The former can be deduced by taking $g\equiv0$ and setting
\begin{equation}
\label{CHO}
S(x,\ph,g)=S(\ph)=-m(\ph - c)\an{,}
\end{equation}
where $m>0$ and $c\in (-1,1)$ are given. With reference to the (local) model
we recall that $c$ is related to the backward and forward reaction rates of the chemical reaction (cf. \cite{GDM}).
Observe that this is an off-critical case and the mass is not conserved.
Regarding the latter, we have $g\equiv0$ and
\begin{equation}
\label{Bert}
S(x,\ph,g)=S(x,\ph)=-\lambda_0\chi_U(x)(\ph - c(x))
\end{equation}
where $\lambda_0>0$, $\chi_U$ is the characteristic function of a given measurable set $U$ (the complement of the inpainting region)
such that $0<\vert U\vert<\vert \Omega\vert$ and
$c: \Omega \to [-1,1]$ is a given bounded function (i.e., the given image). In this case, taking $-1$ as initial datum means that one
can start with a totally white image. Then, the image $\ph$ evolves keeping itself closer to $c$ in the undamaged region $U$ and diffusing
into the damaged one. However, as we shall see (cf. Rem.~\ref{CHOB} below), $c$ \mod{can only be taken close to $\pm 1$ values, provided that 
$\vert U\vert$ is large enough.}

Originally, reaction terms like \eqref{CHO} and \eqref{Bert} have been proposed in the local case only.
Here we also consider and analyze their nonlocal counterpart.
In addition, in Section \ref{SEC:TUMOR}, we present an application to (solid) tumor growth models, where the presence of the source term $g$ becomes meaningful.
This term, in our abstract models, is motivated by the fact that very often the corresponding systems are coupled with other equations (for instance,
reaction-diffusion processes).
Thus, including $g$ is the first step towards possible generalizations of these results to more refined models.

Here, for a certain class of $S$, we prove the existence of a weak solution on a given time interval and, \mod{in the case \eqref{CHO}}, its uniqueness (see Sections \ref{namr} and \ref{existproof}).
However, note that,
the spatial average of $\rev{\ph}$ instantaneously belongs to $(-1,1)$. Hence, all the known results apply. Further issues and possible generalizations to other
models will be discussed in Section \ref{final}.

\section{Notation, assumptions and main results}
\label{namr}

\subsection{Notation}
To begin with, let us introduce some useful notation.
For a given (real) Banach space $X$, we employ $\norma{\cdot}_X$, $X^*$, and $\< \cdot , \cdot >_X$ to indicate, in the order,
the norm of $X$, its topological dual, and the related duality pairing between $X^*$ and $X$.
Lebesgue and Sobolev spaces defined on $\Omega$, for every $1 \leq p \leq \infty$ and $k \geq 0$,
are denoted by $L^p(\Omega)$ and $W^{k,p}(\Omega)$, with associated norms
$\norma{\cdot}_{L^p(\Omega)}=\norma{\cdot}_{p}$ and $\norma{\cdot}_{W^{k,p}(\Omega)}$, respectively.
When $p = 2$, these become Hilbert spaces and we use $\norma{\cdot}=\norma{\cdot}_2$ for the norm of
$\Lx2$ along with $(\cdot,\cdot)$ for the standard inner product, and account for the standard notation $H^k(\Omega):= W^{k,2}(\Omega)$.
Moreover, we set
\begin{align*}
  & H := \Lx2,
  \qquad
  V := \Hx1,
  \qquad
  W:=\{ v \in \Hx2 : \dn v = 0 \,\, \text{on $\partial \Omega$} \}.
\end{align*}

For any given $v\in V^*$, we define its generalized mean value as
$v_{\Omega}:=\frac1{|\Omega|}\<{v},{1}>_V$, and we recall
the Poincar\'e--Wirtinger inequality:
\begin{align}
  \norma{v - v_{\Omega}}  \leq c_{\Omega} \norma{\nabla v}\rev{,}
  \quad  v\in V.
  \label{poincare}
\end{align}
Here $c_\Omega>0$ depends on $\Omega$ and $d$ only\rev{.}
Besides,
we denote by $V_0$, $H_0$, $V_0^*$ the closed subspaces
of $V$, $H$, and $V^*$, respectively,
consisting of functions (or functionals) with zero spatial
average.
Then, we define the linear bounded operator $\RR : V \to \Vp$ as
\begin{align*}
	\<{\RR  u},{v}>_V &:= \int_\Omega \nabla u \cdot \nabla v  , \quad u,v \in V .
\end{align*}
It is well-known that the restrictions $\RR \vert_{V_0}$ yields an isomorphism from $V_0$ to $V_0^*$
with well defined  inverse
$\RR^{-1}=:\NN : V_0^* \to V_0$.
Furthermore, it holds that
\begin{alignat*}{2}
	\<{\RR  u},{\NN v^*}>_V &= \<{v^*},{u}>_V,
	\qquad && u \in V, \; v^* \in V_0^*,
	\\
	\<{v^*},{\NN w^*}>_V &= \iO  \nabla (\NN v^*) \cdot \nabla (\NN w^*),
	\qquad  && v^*,w^* \in V_0^*.
\end{alignat*}
Also, we have that
\begin{alignat*}{2}
	\norma{v^*}_* &:= \norma{\nabla (\NN v^*)} =
	({\nabla (\NN v^*)},{\nabla (\NN v^*)}) ^{1/2}=\<v^*, \NN v^*>_V^{1/2},
	\qquad &&v^*\in V_0^*,
\end{alignat*}
is a norm in $V_0^*$ with associated inner product denoted by
$(\cdot, \cdot )_{*}:= (\nabla (\NN \cdot),\nabla (\NN \cdot))$.
Similarly, setting
\begin{alignat*}{2}
	\norma{v^*}_{-1} &:= \norma{v^*-(v^*)_\Omega}_* +|(v^*)_\Omega|,
	\quad&&v^*\in V^*,
\end{alignat*}
we get an equivalent norm in $V^*$.
Finally, we recall the identity
\begin{alignat*}{2}
\<{\dt v^*(t)},{\NN v^*(t)}>_V &= \frac 12 \frac d {dt} \norma{v^*(t)}^2_{*}
\quad \text{ for a.e.~}t \in (0,T), \quad&v^* \in \H1 {V^*_0},
\end{alignat*}
and the inequality
\begin{align}\label{oss:mean}
	| v_{\Omega}|\leq c \norma{v}_{\Vp},
	\quad v\in V^*.
\end{align}

Identifying $H$ with its dual $H^*$ by employing the scalar product of $H$, we obtain the
chain of continuous and dense embeddings $V\emb H \emb V^*$.
This implies the following interpolation inequality:
\begin{align}
\label{inter:cpt}
	\norma{v} &= \<{v},{v}>_V ^{1/2}= \<{\RR  v},{\NN v}>_V^{1/2} \leq \norma{v}_{V}^{1/2} \norma{\NN  v}_{V}^{1/2} \leq \norma{v}_{V}^{1/2} \norma{v}_*^{1/2},
	\quad v \in \mod{V_0}.
\end{align}

Moreover, we will also use the following inequality following from Ehrling's lemma and the compact inclusion $V\emb H$:
\begin{align}{}	
	\label{comp:ineq}
	\forall\,\delta>0,\quad\exists\,\cd>0:\quad
	&\norma{v} \leq \d  \norma{\nabla v} + \cd \norma{ v}_{-1}, \quad v \in V.
\end{align}

\subsection{Assumptions}
The following structural assumptions are in order:
\begin{enumerate}[label={\bf A\arabic{*}}, ref={\bf A\arabic{*}}]
\item \label{ass:pot}
The double-well shaped potential $F$ enjoys the splitting $F=\hat \beta + \hat \pi$,
where
\begin{align}
	\label{F:abs:1}
	& \hat \beta \in C^0([-1,1]) \cap C^2(-1,1)
	\text{ is strictly convex, nonnegative, and } \widehat\beta(0)=0,
	\\
	\label{F:abs:2}
	& \hat \pi \in C^1({\erre}),
	\\
	\label{F:abs:3}
	& \pi := \hat \pi':\erre \to \erre \text{ is
	$L_0$-Lipschitz continuous for some $L_0>0$.}
\end{align}
On account of conditions \eqref{F:abs:1}--\eqref{F:abs:3}, using convex analysis we can extend $\hat\beta$ to $+\infty$ outside $[-1,1]$.
Thus the subdifferential $\partial \hat \b =: \beta$ yields a maximal monotone
graph in $\erre \times \erre$ with corresponding domain
$D( \b):=\{ r \in \erre:  \b (r) \neq\emptyset\}$.
In addition, since $\hat \b \in C^1(-1,1)$,
it readily follows that $\b$ is single-valued
with $\beta(0)=0$ and $F'= \beta + \pi$ in $(-1,1)$.
Finally, we suppose that:
\begin{align}
	\label{F:abs:4}
	& \lim_{r \searrow (-1)^{+}} \b(r) = -\infty,
	\quad
	\lim_{r \nearrow 1^-}\b(r) = +\infty,
	\\
	\label{F:abs:5}
	&
	\exists\, \rho \in (0,1):
	\quad 	\b \in L^2(-1,-1+\rho).
\end{align}
\item \label{ass:g}
We assume that $g$ can be written as
\luca{$g=g_0+\tilde g$, where
$g_0\in V$, $\tilde g := g_1 + g_2$,
$g_1\in L^\infty(0,T; H)\cap L^2(0,T; V)$,
and $g_2\in H^1(0,T; H)$.}
Moreover, we require in the local case that
\begin{align}
  \label{ass:g_loc}
	 \forall\,\alpha\in(0,1),\quad\exists\,C_{g}>0:\quad
	\norma{\tilde g}_{L^\infty(0,t; H)}\leq C_gt^{\alpha} \quad\forall\,t\in[0,T],
\end{align}
whereas in the nonlocal case:
\begin{align}
  \label{ass:g_nloc}
  \begin{cases}
	g_0\in W,\\
	\tilde g\in L^\infty(0,T; V),\\
	\forall\,\alpha \in (0,1), \; \exists\,C_g>0:\;
	\norma{\nabla \tilde g}_{L^\infty(0,t; H)}\leq C_gt^\alpha \quad\forall\,t\in[0,T].
  \end{cases}
\end{align}
\item \label{ass:S}
Let the source $S$ be defined as
\begin{align}
	\label{source:splitting}
	S(x,\ph,g):=  - m(x)\ph + h(x,\ph,g),
\end{align}
where
\begin{align}
	\label{source:splitting1}
	&m\in L^\infty(\Omega), \qquad
	m\geq0 \quad\text{a.e.~in } \Omega, \qquad
	 m_\Omega>0,\\
	\label{source:splitting2}
	&h :\Omega\times [-1,1] \times \erre \to \erre \quad\text{is a Carath\'{e}odory function such that} \nonumber\\
	&h(\cdot,u,v)\in L^\infty(\Omega), \quad \forall\,(u,v)\in [-1,1]\times \erre,\\
    &h(x,\cdot,\cdot) \in C^0([-1,1]\times\erre),\quad\text{for a.a.~}x\in \Omega,
\end{align}
and the following compatibility condition between $h$ and $m$ holds:
\begin{align}
	\label{source:splitting3}
	&\exists\,\overline h>0:\;
	|h(x,u,v)| \leq \overline h < m_\Omega,
	\quad\text{for a.a.~}x\in\Omega\rev{,}\,\,\forall\,(u,v)\in[-1,1]\times\erre.
\end{align}
We also set for convenience $\overline m:=\|m\|_{L^\infty(\Omega)}$
and note that $\overline m>0$ by \eqref{source:splitting1}.
\begin{remark}
Assumption \eqref{source:splitting3}
means that the magnitude of $h$ must be controlled by the space-average of $m$.
Note that this allows in particular the degenerate choice
\begin{equation}
\label{char}
  m(x)=
  \begin{cases}
  \overline m \quad&\text{if } x\in U,\\
  0\quad&\text{if } x\in \Omega\setminus U,
  \end{cases}
\end{equation}
where $\overline m>0$ is a fixed constant and
$U\subset \Omega$ is a measurable subset such that  $0<\vert U\vert<\vert \Omega\vert$.
Thus we have $\overline h:=\text{\rm ess sup}_{x,u,v}|h(x,u,v)|< m_\Omega= \frac{|U|}{|\Omega|}\,\overline m$.
\end{remark}

\begin{remark}
\label{CHOB}
Let us set $g=0$ in equation \eqref{sys:2} (\eqref{sys:non:2} resp.).
Then, we observe that the Cahn--Hilliard--Oono equation (see \eqref{CHO}) can be recovered by taking $h$ and $m$ constants, that is,
$$
h(x,u,v) = mc,
\quad x \in \Omega,
$$
where $c\in (-1,1)$ is given. Another meaningful example is provided by the equation proposed for the inpainting (see \eqref{Bert}).
In this case, we have
$$
h(x,u,v) = m(x)c(x)
$$
where $m$ is given by \eqref{char}, $c\in L^\infty(\Omega)$ is such that $c(x)\in[-1,1]$ for almost every $x\in \Omega$, and $\|c\|_{L^\infty(\Omega)}<\frac{|U|}{|\Omega|}$.
Observe that the latter condition is a bit restrictive since $c$ cannot take $\pm 1$ values. \mod{However, we can say that the smaller the damaged region is, the closer to $\pm 1$ $c$ can be taken.
On the other hand, if we consider, for instance, $F$ given by \eqref{Flog}, then
$\varphi$ cannot take the values $\pm1$ either. However, in this case, 
we can take $\vartheta$ small enough in order to have minima as close as possible to $\pm 1$.} \anlast{For further details concerning this issue, see \cite[Subsec.~2.1]{Lam}  and the references provided therein.}
\end{remark}

\mau{
\begin{remark}
\label{KS}
We observe that the Cahn--Hilliard equation with a proliferation term of the form (see \cite{KS}) $S(\ph):=  \alpha(1-\ph^2)$, $\alpha >0$, is not covered by our assumptions.
\end{remark}
}
	
\item  \label{ass:kernel}
The spatial convolution kernel $K \in W^{1,1}_{\rm loc}(\erre^d)$ is such that
\begin{align*}
		K(x)=K(-x),
		\quad
		\text{for a.a. $x\in\erre^d$.}
\end{align*}
  Furthermore, we postulate that
  \[
  a_*:=\inf_{x\in\Omega}\int_\Omega K(x-y)\, { \dd y }=\inf_{x\in\Omega}a(x)\geq0,
  \]
  \[
  a^*:= \sup_{x\in\Omega} \int_\Omega |K(x-y)|\, \dd y< +\infty, \qquad
  b^*:= \sup_{x\in\Omega} \int_\Omega |\nabla K(x-y)|\, \dd y< +\infty,
  \]
  and set $c_a:=\max\{a^*-a_*,1\}>0$.
  Finally, we suppose that
  \[
  \exists\, C_0 >0 : \quad a_* + F''(r)\geq C_0, \qquad\forall\,r\in (-1,1).
  \]
\end{enumerate}

In order to handle the fact that the initial datum
is a pure phase, we will strongly exploit the following qualitative result, \mau{which is one of the main technical novelties of this work.}
\begin{proposition}
\label{prop:pot}
Assume \ref{ass:pot}. Then, the following properties hold.
\begin{enumerate}[start=1,label={{(P\arabic*})}]
  \item \label{PROP:MZ:std}
  For every compact subset $[r_*,r^*]\subset (-1,1)$, $-1 <r_* \leq r^* <1$,
  there exist constants $c_*, C_*>0$ such that,
  for every $r\in(-1,1)$ and for every $r_0\in [r_*,r^*]$ it holds that
  \begin{equation}\label{MZ:std}
  c_*|\b(r)|
  \leq \b(r)(r-r_0)
  +C_*.
  \end{equation}
  \item \label{PROP:MZ:sharp}
  For every $\delta\in(0,1)$, there exist constants $c_\b, C_\b>0$ such that,
  for every $r\in(-1,1)$ and for every $r_0\in(-1,-1+\delta)$
  it holds that
  \begin{equation}\label{MZ:sharp}
  c_\b (r_0+1) |\b(r)|
  \leq \b(r)(r-r_0)+\left[(r+1)
  +C_\b (r_0+1)\right]
  |\b(-1+(r_0+1)/2)|.
  \end{equation}
  In particular, this implies that,
  for every $\delta\in(0,1)$, there exist $c_\b, C_\b>0$ such that,
  for every measurable $\phi:\Omega\to(-1,1)$ with
  $\phi_\Omega\in(-1,-1+\delta)$, we have
  \begin{equation}\label{MZ:sharp'}
  c_\b (\phi_\Omega+1) \int_\Omega|\b(\phi)|\leq
  \int_\Omega\beta(\phi)(\phi-\phi_\Omega)
  +C_\beta(\phi_\Omega+1)
  |\b(-1+(\phi_\Omega+1)/2)|.
  \end{equation}
\end{enumerate}
\end{proposition}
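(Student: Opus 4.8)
The plan is to derive both pointwise estimates purely from the three structural features of $\b$ that \ref{ass:pot} guarantees: $\b=\partial\hat\b$ is monotone nondecreasing (indeed strictly increasing, $\hat\b$ being strictly convex), it satisfies $\b(0)=0$ so that $\b(r)\le0$ for $r\le0$ and $\b(r)\ge0$ for $r\ge0$, and it is continuous, hence bounded, on every compact subset of $(-1,1)$. In both cases I would split the range $(-1,1)$ of $r$ into a few subintervals located by $r_0$ (and, for \ref{PROP:MZ:sharp}, by the midpoint of $(-1,r_0)$), verify the claim on each piece, and fix the constants at the end as the worst case over the pieces. The blow-up conditions \eqref{F:abs:4} are what make the statements nontrivial (since $|\b(r)|$ may be large near $\pm1$), but they are not needed in the argument itself.

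For \ref{PROP:MZ:std}, fix $[r_*,r^*]\subset(-1,1)$ and cut $(-1,1)$ at $\tfrac{r_*-1}{2}<0$ and $\tfrac{r^*+1}{2}>0$. On the right zone $r\in[\tfrac{r^*+1}{2},1)$ one has $\b(r)\ge0$ and $r-r_0\ge r-r^*\ge\tfrac{1-r^*}{2}>0$, so $\b(r)(r-r_0)\ge\tfrac{1-r^*}{2}|\b(r)|$; symmetrically, on $r\in(-1,\tfrac{r_*-1}{2}]$ one has $\b(r)\le0$ and $r_0-r\ge\tfrac{r_*+1}{2}>0$, giving $\b(r)(r-r_0)\ge\tfrac{r_*+1}{2}|\b(r)|$. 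On the central compact zone $|\b|\le M$ for some $M>0$ and $|r-r_0|\le2$, whence $\b(r)(r-r_0)\ge-2M$ while $|\b(r)|\le M$. Choosing $c_*:=\min\{\tfrac{1-r^*}{2},\tfrac{r_*+1}{2}\}$ and $C_*:=(c_*+2)M$ closes all three cases.

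For \ref{PROP:MZ:sharp}, write $\eps:=r_0+1\in(0,\delta)$, let $p:=-1+\eps/2$ be the midpoint of $(-1,r_0)$ and set $m_\eps:=|\b(p)|=-\b(p)>0$; the target is $c_\b\eps\,|\b(r)|\le\b(r)(r-r_0)+[(r+1)+C_\b\eps]\,m_\eps$, and I split $(-1,1)$ at $p$, $r_0$ and $0$. For $r\le p$ one has $\b(r)<0$ and $r_0-r\ge\eps/2$, so $\b(r)(r-r_0)=|\b(r)|(r_0-r)\ge\tfrac\eps2|\b(r)|$, and $c_\b\le\tfrac12$ suffices since the bracket is nonnegative. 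For $p<r\le r_0$, monotonicity gives $|\b(r)|\le m_\eps$, $\b(r)(r-r_0)\ge0$ and $r+1>\eps/2$, so the single term $(r+1)m_\eps>\tfrac\eps2 m_\eps\ge\tfrac\eps2|\b(r)|$ again closes the case with $c_\b\le\tfrac12$. The delicate piece is $r_0<r\le0$, where $\b(r)(r-r_0)\le0$ so the unfavorable sign must be absorbed by $(r+1)m_\eps$: using $|\b(r)|\le m_\eps$ (valid since $\b(r)\le0$ and $r>p$) the claim reduces to $c_\b\eps+(r-r_0)\le(r+1)+C_\b\eps$, and the identity $r-r_0=(r+1)-\eps$ turns this into $(c_\b-1)\eps\le C_\b\eps$, true whenever $c_\b\le1$. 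Finally, for $0<r<1$ one has $\b(r)\ge0$ and $r-r_0=(r+1)-\eps>1-\delta$, so it suffices that $c_\b\eps\le r-r_0$, which holds once $c_\b\le\tfrac{1-\delta}{\delta}$. Taking $c_\b:=\min\{\tfrac12,\tfrac{1-\delta}{\delta}\}$ and any $C_\b>0$ settles \ref{PROP:MZ:sharp}.

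The integrated bound \eqref{MZ:sharp'} then follows by applying \eqref{MZ:sharp} pointwise with $r=\phi(x)$ and $r_0=\phi_\Omega$ (admissible because $\phi_\Omega+1\in(0,\delta)$) and integrating over $\Omega$: the left-hand side becomes $c_\b(\phi_\Omega+1)\int_\Omega|\b(\phi)|$, the first right-hand term becomes $\int_\Omega\b(\phi)(\phi-\phi_\Omega)$, and since $m_\eps=|\b(-1+(\phi_\Omega+1)/2)|$ is constant in $x$ while $\int_\Omega(\phi+1)=|\Omega|(\phi_\Omega+1)$, the remaining contribution merges into $(1+C_\b)|\Omega|(\phi_\Omega+1)m_\eps$; relabeling the constant yields the stated inequality. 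I expect the only genuine obstacle to be the bookkeeping on the zone $r_0<r\le0$ of \ref{PROP:MZ:sharp}, where the product $\b(r)(r-r_0)$ has the wrong sign and the whole point of the sharp estimate, namely the linear vanishing of the control as $\eps\to0^+$, has to be extracted from the $(r+1)m_\eps$ term.
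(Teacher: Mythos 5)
Your proposal is correct, and it differs from the paper's own proof in two respects worth recording. For \ref{PROP:MZ:std} the paper does not argue at all but cites \cite[Sec.~5]{GMS09} and \cite{MZ}, whereas you give a self-contained three-zone proof (cutting at the midpoints $\frac{r_*-1}{2}$ and $\frac{r^*+1}{2}$); your version makes explicit that only the monotonicity of $\beta$, $\beta(0)=0$, and continuity on compact subsets of $(-1,1)$ are used, while the blow-up \eqref{F:abs:4} plays no role. For \ref{PROP:MZ:sharp} your overall strategy coincides with the paper's appendix proof --- a pointwise four-case analysis anchored at the midpoint $\underline r=-1+(r_0+1)/2$, with monotonicity bounding $|\beta(r)|$ by $|\beta(\underline r)|$ in the middle zone, and the identical estimate $-r_0\geq\frac{1-\delta}{\delta}(r_0+1)$ on $(0,1)$ --- but the decomposition of the negative region is genuinely different: the paper introduces the auxiliary threshold $\overline r=-1+(r_0+1)/\delta>r_0$, treats $[\underline r,\overline r]$ by paying $(\overline r-\underline r)\max_{[\underline r,\overline r]}|\beta|$, which is where its constant $C_\beta=\frac1\delta$ originates, and then handles $(\overline r,0]$ separately, whereas you cut exactly at $r_0$ and dispose of the wrong-sign zone $(r_0,0]$ through the identity $r-r_0=(r+1)-(r_0+1)$, reducing everything to the scalar inequality $(c_\beta-1)\eps\leq C_\beta\eps$. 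The payoff of your route is a marginally sharper conclusion: both arguments produce the same $c_\beta=\min\{\frac12,\frac{1-\delta}{\delta}\}$, but yours shows \eqref{MZ:sharp} holds with \emph{any} $C_\beta>0$, while the paper's bookkeeping forces $C_\beta=\frac1\delta$; since only the existence of some $C_\beta>0$ is ever used downstream (in \eqref{MZ:sharp'} and in Step (IV) of the existence proof), the two are interchangeable in the rest of the paper. Your passage to the integrated form \eqref{MZ:sharp'}, using $\int_\Omega(\phi+1)=|\Omega|(\phi_\Omega+1)$ and relabeling $(1+C_\beta)|\Omega|$, is exactly the implicit argument the paper intends; if one wants to be fastidious, note that the pointwise right-hand side is bounded below by $-(2+C_\beta)|\beta(\underline r)|$, so the integral $\int_\Omega\beta(\phi)(\phi-\phi_\Omega)$ is well defined in $(-\infty,+\infty]$ and the inequality holds in the extended sense even when $\beta(\phi)\notin L^1(\Omega)$.
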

Note that the first property~\ref{PROP:MZ:std} is a classical result in the literature
of Cahn--Hilliard problems, and dates back to the ideas in
\cite[Sec.~5]{GMS09} and \cite{MZ}: this is typically used
to control the $L^1$-norm in space of $\beta(\varphi)$ whenever
the mass of the initial datum, represented by $r_0$ in the above estimates,
 is confined in the interior
of the physical domain $(-1,1)$.
Nevertheless, in our case the initial datum is a pure phase
so, unfortunately, the first property~\ref{PROP:MZ:std} cannot be employed as the constants $c_*$ and $  C_*$ strongly depend on $r_*$ and $ r^*$.
The main technical novelty that we present is then
the refinement of such estimate, i.e.\rev{,} property
\ref{PROP:MZ:sharp}: here, the qualitative behavior of
the constants appearing in \eqref{MZ:std} is
explicitly tracked when $r_0$ approaches the pure phase $-1$.
Note that the constants $c_\beta, C_\beta$ appearing in \eqref{MZ:sharp}
depend only on the threshold value $\delta$, but
are {\em independent of $r$ and $r_0$.}
The role of the parameter $\delta$ is only technical
as it represents the closeness of $r_0$ to the pure phase:
for example, one can think to fix $\delta=\frac13$.
For the corresponding proof, we refer the reader to the Appendix.

We point out that the Flory--Huggins potential \eqref{Flog}
satisfies \ref{ass:pot}. Thus, it also enjoys
the two properties \ref{PROP:MZ:std}--\ref{PROP:MZ:sharp}.
Let us note that
Proposition~\ref{prop:pot} also
holds in the setting of a three-phases Flory--Huggins potential, for
which we refer to \cite[Appendix A]{GSS}.

\subsection{Main results}
The existence results for systems \eqref{sys:1}--\eqref{sys:4} and \eqref{sys:non:1}--\eqref{sys:non:4} read as follow.

\begin{theorem}[local case]\label{THM:EX:LOCAL}
Suppose that \ref{ass:pot}--\ref{ass:S} hold and let the initial datum be the pure phase
\begin{align*}
	\ph_0\equiv-1.
\end{align*}
Then, problem \eqref{sys:1}--\eqref{sys:4}
admits a weak solution $(\ph,\mu)$. Namely, it holds that
\begin{align*}
	& \ph \in \H1 \Vp \cap \L\infty V \cap \L2 W,
	\\
	& \ph \in \Ls4 W \cap \Ls2 {W^{2,\tre}(\Omega)} \quad \forall\, \sigma \in (0,T),
	\\
	& \ph \in \L {2p} W \cap \L {p} {W^{2,\tre}(\Omega)} \quad
	\forall\, p\in (1,\ov p),
	\\
	& \ph \in L^\infty(Q), \quad |\ph(x,t)| <1 \quad\text{ for a.e.~} (x,t) \in Q,
	\\
	& \mu \in \L p  V \cap \Ls2 V \quad
	\forall\, p\in (1,\ov p), \quad  \forall\,\sigma \in (0,T),
    \\
    &\nabla \mu \in \L2 H,
\end{align*}
where
\begin{align}
	\label{def:bp}
\tre\in\begin{cases}
[1,6] \quad&\text{if } d=3,\\
[1,+\infty) \quad&\text{if } d=2,
\end{cases}
\qquad\qquad
\ov p:=
\begin{cases}
  \frac43 \quad&\text{if } g_0\in V\setminus W,\\
  2\quad&\text{if } g_0\in W,
\end{cases}
\end{align}
such that
\begin{align*}
		\mu=  - \Delta \ph + F'(\ph) +g, \quad & \text{$a.e.$ in $Q$},
\end{align*}
and
the variational equality
\begin{align*}
	& \<\dt \ph ,v>_{V}
	+ \iO \nabla \mu \cdot \nabla v = \iO S(\cdot,\ph,g) v
\end{align*}
is satisfied for every test function $ v\in V$, and almost everywhere in $(0,T)$.
Moreover, it holds that
\begin{align*}
	\ph(0)=-1 \quad \text{a.e.~in $\Omega.$}
\end{align*}
\end{theorem}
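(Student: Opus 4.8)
The plan is to obtain the weak solution as the limit of a regularized problem in which the singular convex part $\hat\beta$ of the potential is replaced by its Moreau--Yosida regularization $\hat\beta_\lambda$ (with $\beta_\lambda:=\hat\beta_\lambda{}'$ the Yosida approximation of $\beta$), $\lambda\in(0,1)$. For each $\lambda$ the potential is globally Lipschitz, so the approximating system --- posed with the \emph{same} pure phase datum $\ph_0\equiv-1$, which is now admissible since $\beta_\lambda$ is defined on all of $\erre$ --- is solvable by standard tools (a Faedo--Galerkin scheme together with the theory for Cahn--Hilliard equations with regular potential and mass source), yielding a solution $(\phl,\ml)$. All the work is then in deriving estimates on $(\phl,\ml)$ that are uniform in $\lambda$ and sufficient to pass to the limit.

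The first and conceptually new step is the quantitative detachment of the spatial mean from the pure phase. Writing $y_\lambda:=(\phl)_\Omega$ and integrating the approximate version of \eqref{sys:1} over $\Omega$ gives \eqref{ODEmean}, i.e.\ $y_\lambda'=(S(\cdot,\phl,g))_\Omega$, $y_\lambda(0)=-1$. Using the splitting \eqref{source:splitting}, writing $-\tfrac1{|\Omega|}\int_\Omega m\,\phl = m_\Omega-\tfrac1{|\Omega|}\int_\Omega m(\phl+1)$, and exploiting $0\le m\le\overline m$ together with $|h|\le\overline h$, one obtains the differential inequality $y_\lambda' \ge (m_\Omega-\overline h)-\overline m\,(y_\lambda+1)$; the crucial sign $m_\Omega-\overline h>0$ is precisely the compatibility condition \eqref{source:splitting3}. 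A comparison argument then yields, uniformly in $\lambda$,
$$
y_\lambda(t)+1 \ge \frac{m_\Omega-\overline h}{\overline m}\bigl(1-e^{-\overline m t}\bigr)>0, \qquad t\in(0,T],
$$
and a symmetric computation gives $y_\lambda(t)<1$. Thus the mean instantaneously enters $(-1,1)$ and, near $t=0$, detaches \emph{linearly}, $y_\lambda(t)+1\gtrsim t$. (At the Yosida level the pointwise bound $|\phl|\le1$ is not available; the identities above are made rigorous uniformly in $\lambda$ by truncating the argument of $h$ and $m$ to $[-1,1]$ and absorbing the resulting error, which is controlled by $\|\nabla\phl\|$ via \eqref{poincare}.)

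The second step converts this detachment into an integrability bound for $\mu$. The energy identity (test \eqref{sys:1} by $\ml-(\ml)_\Omega$ and \eqref{sys:2} by $\dt\phl$) together with the dissipative estimate (test the mean-free part of \eqref{sys:1} by $\NN(\phl-(\phl)_\Omega)$) provides the uniform bounds $\phl\in L^\infty(0,T;V)$, $\nabla\ml\in L^2(0,T;H)$ and $\int_0^T\!\int_\Omega\beta_\lambda(\phl)(\phl-y_\lambda)\le C$, the regularity of $g$ from \ref{ass:g} being used to treat the forcing. The only quantity not yet controlled is the mean $(\ml)_\Omega=(\beta_\lambda(\phl))_\Omega+(\pi(\phl))_\Omega+g_\Omega$, hence $\int_\Omega|\beta_\lambda(\phl)|$. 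Here the novel estimate \ref{PROP:MZ:sharp} is decisive: since $y_\lambda\in(-1,-1+\delta)$ for small $t$, the Yosida-level analogue of \eqref{MZ:sharp'} gives
$$
\int_\Omega|\beta_\lambda(\phl)| \le \frac{1}{c_\beta(y_\lambda+1)}\int_\Omega\beta_\lambda(\phl)(\phl-y_\lambda) + \frac{C_\beta}{c_\beta}\,\bigl|\beta_\lambda\bigl(-1+\tfrac{y_\lambda+1}{2}\bigr)\bigr|.
$$
The factor $1/(y_\lambda+1)\sim 1/t$ is compensated by the smallness of $\int_\Omega\beta_\lambda(\phl)(\phl-y_\lambda)$ and of $\|\nabla\phl\|$ as $t\to0^+$ --- a smallness guaranteed by the vanishing-rate hypothesis \eqref{ass:g_loc} on $\tilde g$ --- while the boundary term $|\beta_\lambda(-1+(y_\lambda+1)/2)|$ behaves, along the linear detachment, like $|\beta(-1+ct)|$, which is $p$-integrable in time for every $p\le2$ by \eqref{F:abs:5}. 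Combining with $\|\ml-(\ml)_\Omega\|\le c_\Omega\|\nabla\ml\|$, this yields $\ml\in L^p(0,T;V)$ for $p\in(1,\overline p)$, the threshold $\overline p$ (see \eqref{def:bp}) depending on the regularity of $g_0$ through the elliptic estimate for \eqref{sys:2}. Away from $t=0$ the mean is bounded away from $\pm1$, so the classical estimate \ref{PROP:MZ:std} applies with uniform constants and upgrades this to $\ml\in L^2(\sigma,T;V)$ for every $\sigma\in(0,T)$; elliptic regularity applied to $-\Delta\phl+\beta_\lambda(\phl)=\ml-\pi(\phl)-g$ then produces the remaining space--time bounds on $\phl$.

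Finally, I would pass to the limit $\lambda\to0$. The uniform estimates give, up to subsequences, weak(-star) convergence of $\phl,\ml$ in the relevant spaces and, by Aubin--Lions (using $\dt\phl$ bounded in $L^2(0,T;V^*)$ from \eqref{sys:1}), strong convergence $\phl\to\ph$ in $C^0([0,T];V^*)\cap L^2(0,T;H)$, whence $S(\cdot,\phl,g)\to S(\cdot,\ph,g)$ by the Carath\'eodory structure \eqref{source:splitting2}. The identification $\beta_\lambda(\phl)\wto\beta(\ph)$ with $|\ph|<1$ a.e.\ follows from the maximal monotonicity of $\beta$ via the usual $\limsup$/Minty argument (the uniform $L^1$-in-space control of $\beta_\lambda(\phl)$ preventing concentration at $\pm1$), and $\ph(0)=-1$ is recovered from $\ph\in H^1(0,T;V^*)\hookrightarrow C^0([0,T];V^*)$. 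The main obstacle throughout is exactly the control, uniform in $\lambda$ and integrable in time, of $\int_\Omega|\beta_\lambda(\phl)|$ (equivalently of $(\ml)_\Omega$) near the pure-phase initial time: this is where the loss from $L^2$ to $L^p$, $p<2$, originates, and where the refined inequality \ref{PROP:MZ:sharp}, balanced against the linear mean-detachment of the previous step, is essential.
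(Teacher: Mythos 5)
Your route is genuinely different from the paper's at the approximation stage: the paper keeps the singular potential $F$ untouched and instead shifts the initial datum to $\varphi_{0,\lambda}=-1+\lambda$, which buys the exact bound $|\phl|\le1$ at the approximate level, so that the mean-value ODE closes by pure comparison (with constants depending only on $m$ and $h$) and Proposition~\ref{prop:pot} applies verbatim. In your Moreau--Yosida scheme both points become conditional: you must (i) check that \eqref{MZ:sharp} holds for $\beta_\lambda$ on all of $\erre$ with constants uniform in $\lambda$ --- this is true, since $\beta_\lambda$ is monotone with $\beta_\lambda(0)=0$, has the same sign structure as $\b$, satisfies $|\beta_\lambda(-1+(r_0+1)/2)|\le|\b(-1+(r_0+1)/2)|$, and the four-case argument of the Appendix extends to $r\notin[-1,1]$, but you only flag it --- and (ii) absorb in the mean ODE the overshoot error caused by $\phl\notin[-1,1]$, which you propose to control by $C\norma{\nabla\phl}$. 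It is at point (ii), and at the corresponding point in your $\beta_\lambda$-estimate, that your write-up has a genuine gap rather than a routine verification.

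The gap is the absence of the paper's preliminary rate estimate (its Step (II)). To deduce linear detachment $y_\lambda(t)+1\gtrsim t$ from $y_\lambda'\ge(m_\Omega-\overline h)-\overline m\,(y_\lambda+1)-C\norma{\nabla\phl}$ you need $\int_0^t\norma{\nabla\phl(s)}\,\ds=o(t)$, and to make the factor $1/(y_\lambda+1)\sim1/s$ in \eqref{MZ:sharp'} harmless you need $s\mapsto s^{-1}\norma{\phl(s)-\ph_0}\,(1+\norma{\nabla\ml(s)})$ integrable near $0$; since $\norma{\nabla\ml}$ is only in $L^2$, the qualitative ``smallness as $t\to0^+$'' you invoke is not enough --- one needs the quantitative bound $\norma{\phl(s)-\ph_0}\le C_\alpha s^\alpha$ with $\alpha>\frac12$, uniformly in $\lambda$. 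The paper derives exactly this by testing the first equation by $\phl-\varphi_{0,\lambda}$ and the second by $-\Delta\phl$, applying Gronwall, and then iterating \eqref{step2:local} to bootstrap $t^{1/2}$ up to any $\alpha\le\frac34$ when $g_0\in V\setminus W$ (the term $\int_\Omega g_0\Delta\phl$ can then only be handled through $\norma{\nabla\phl}$), and up to any $\alpha<1$ when $g_0\in W$ (cf.~\eqref{stima:gzero}). This cap on $\alpha$ --- not ``the elliptic estimate for \eqref{sys:2}'', as you assert --- is what produces $\ov p$ in \eqref{def:bp}: one needs $\ell(\alpha-1)>-1$ so that $s^{\alpha-1}\in L^\ell(0,T_0)$, and H\"older against $\norma{\nabla\ml}\in L^2(0,T)$ then yields $\b(\phl)\in L^p(0,T;L^1(\Omega))$ precisely for $p<\ov p$. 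As written, your argument is circular in two places (the mean ODE and the $L^1$-control of $\beta_\lambda(\phl)$ both presuppose the rate, and your energy/dissipative bounds only close after the \ref{PROP:MZ:sharp} absorption), so you must insert this rate estimate as a separate step before the mean-value analysis; once it is in place, the remainder of your outline --- energy estimate, \ref{PROP:MZ:std} away from $t=0$ to get $\ml\in L^2(\sigma,T;V)$, elliptic regularity, Aubin--Lions and Minty --- matches the paper's Steps (III)--(V) and goes through.
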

\begin{theorem}[nonlocal case]\label{THM:EX:NONLOCAL}
Suppose that \ref{ass:pot}--\ref{ass:kernel} hold and let the initial datum be the pure phase
\begin{align*}
	\ph_0\equiv-1.
\end{align*}
Then, problem \eqref{sys:non:1}--\eqref{sys:non:4}
admits a weak solution $(\ph,\mu)$. Namely, it holds that
\begin{align*}
	& \ph \in \H1 \Vp \cap \L\infty H \cap \L2 V ,
	\\	
	& \ph \in L^\infty(Q), \quad |\ph(x,t)| <1 \quad\text{ for a.e.~} (x,t) \in Q,
	\\
	& \mu \in \L p  V \cap \Ls2 V \quad \forall\,p\in (1,2),
	\quad  \forall\, \sigma \in (0,T),
    \\
    &\nabla \mu \in \L2 H,
\end{align*}
such that
\begin{align*}
	\mu=  a\ph - K* \ph + F'(\ph) +g, \quad & \text{$a.e.$ in $Q$},
\end{align*}
and
the variational equality
\begin{align*}
	& \<\dt \ph ,v>_{V}
	+ \iO \nabla \mu \cdot \nabla v = \iO S(\cdot,\ph,g) v
\end{align*}
is satisfied for every test function $ v\in V$, and almost everywhere in $(0,T)$.
Moreover, it holds that
\begin{align*}
	\ph(0)=-1 \quad \text{a.e.~in $\Omega.$}
\end{align*}
\end{theorem}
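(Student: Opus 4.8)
The plan is to realize the weak solution as a limit of regularized problems in which the maximal monotone graph $\b$ is replaced by a Lipschitz approximation $\b_\lambda$ (Moreau--Yosida regularization), so that $F_\lambda':=\b_\lambda+\pi$ is globally Lipschitz and $S$ is extended to all of $\erre$ by freezing $h(x,\cdot,g)$ at the endpoints $\pm1$. For fixed $\lambda>0$ the regularized system for $(\ph_\lambda,\mu_\lambda)$ with $\ph_\lambda(0)=-1$ carries no singularity and admits a solution by standard arguments (Faedo--Galerkin or a fixed-point scheme), since $v\mapsto av-K*v$ is bounded on $H$ and $\b_\lambda+\pi$ is Lipschitz. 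The whole difficulty lies in producing a priori bounds that are uniform in $\lambda$ and compatible with the pure-phase datum.

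The first ingredient is the mean-value analysis. Integrating the approximate equation over $\Omega$ yields the Cauchy problem \eqref{ODEmean} for $y_\lambda(t):=(\ph_\lambda(t))_\Omega$ with $y_\lambda(0)=-1$. Writing $S=-m\ph+h$ and using $m\ge0$, the compatibility \eqref{source:splitting3}, and $\ph_\lambda\ge-1$ (its approximate analogue), I would obtain, for $z_\lambda:=y_\lambda+1\ge0$,
\[
z_\lambda'(t)\ \ge\ (m_\Omega-\overline h)\ -\ \overline m\, z_\lambda(t),\qquad z_\lambda(0)=0,
\]
whence, by comparison, the quantitative detachment $z_\lambda(t)\ge \tfrac{m_\Omega-\overline h}{\overline m}\,(1-e^{-\overline m t})\ge c\,t$ on a short interval, uniformly in $\lambda$. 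This makes rigorous the claim that the spatial average instantaneously leaves the pure phase, and the \emph{linear} detachment rate is what governs the time-integrability below.

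The second, and most delicate, ingredient is the control of $\mu_\lambda$ near the pure phase. Testing the equation by $\mu_\lambda$ and exploiting the nonlocal energy \eqref{energy:nonlocal} gives $\ph_\lambda$ bounded in $L^\infty(0,T;H)$ and $\nabla\mu_\lambda$ in $L^2(0,T;H)$; the source contribution is in fact benign near $t=0$, since there $(\mu_\lambda)_\Omega<0$ (because $\b_\lambda(\ph_\lambda)\to-\infty$) while $(S)_\Omega=y_\lambda'>0$, so that the coupling term $(\mu_\lambda)_\Omega(S)_\Omega$ has a favorable sign. To upgrade this to control of $\mu_\lambda$ I would use $\|\mu_\lambda\|_V\lesssim\|\nabla\mu_\lambda\|+|(\mu_\lambda)_\Omega|$ and, since $\pi$ and $a\ph-K*\ph$ are bounded, reduce everything to an $L^1(\Omega)$-bound on $\b_\lambda(\ph_\lambda)$. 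This is exactly where the sharp estimate \eqref{MZ:sharp'} enters: with $\phi_\Omega=y_\lambda(t)\in(-1,-1+\delta)$ it bounds $\int_\Omega|\b_\lambda(\ph_\lambda)|$ by
\[
\frac{1}{c_\b\,z_\lambda(t)}\int_\Omega\b_\lambda(\ph_\lambda)\,(\ph_\lambda-y_\lambda)\ +\ \frac{C_\b}{c_\b}\,\bigl|\b_\lambda(-1+z_\lambda(t)/2)\bigr|.
\]
The last term is in $L^2(0,T)$ by \eqref{F:abs:5} together with $z_\lambda\gtrsim t$; the first is estimated by testing the chemical-potential relation by $\ph_\lambda-y_\lambda$, which gives $\int_\Omega\b_\lambda(\ph_\lambda)(\ph_\lambda-y_\lambda)\lesssim(1+\|\nabla\mu_\lambda\|)\,\|\ph_\lambda-y_\lambda\|$, together with the vanishing of $\|\ph_\lambda(t)-y_\lambda(t)\|$ as $t\to0$ (quantified through \eqref{inter:cpt} and $\ph_\lambda(0)=-1$). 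The main obstacle is precisely this bookkeeping: balancing the linear rate $z_\lambda\gtrsim t$ against the $L^2$-in-time bound for $\nabla\mu_\lambda$ and the local $L^2$-integrability of $\b$ near $-1$ produces an integrable singular weight in time, yielding control of $\mu_\lambda$ only in $L^p(0,T;V)$ for $p<2$, whereas for $t\ge\sigma>0$ one has $z_\lambda\ge c_\sigma>0$ and recovers the full $L^2(\sigma,T;V)$ bound.

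It remains to pass to the limit $\lambda\to0$. The nonlocal spatial regularity $\ph_\lambda\in L^2(0,T;V)$ follows from \ref{ass:kernel}: differentiating $\mu_\lambda=a\ph_\lambda-K*\ph_\lambda+F_\lambda'(\ph_\lambda)+g$ and testing by $\nabla\ph_\lambda$, the coercivity $a+F_\lambda''\ge a_*+F''\ge C_0>0$ absorbs $\|\nabla\ph_\lambda\|^2$, the terms $\nabla a\,\ph_\lambda$, $(\nabla K)*\ph_\lambda$, $\nabla g$ being lower order. Reading $\dt\ph_\lambda\in L^2(0,T;V^*)$ off the equation and applying Aubin--Lions yields strong convergence of $\ph_\lambda$ in $L^2(0,T;H)$ and a.e.\ in $Q$; all Lipschitz and nonlocal terms pass to the limit, while the singular term is identified via the uniform $L^1$-bound on $\b_\lambda(\ph_\lambda)$ and maximal monotonicity (with a.e.\ convergence), which simultaneously forces $|\ph|\le1$ a.e.\ and $\b(\ph)\in L^1$. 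Finally $\ph(0)=-1$ is preserved since $\ph_\lambda(0)=-1$ and $\ph_\lambda\to\ph$ in $C([0,T];V^*)$. The genuinely new point throughout is the near-$t=0$ control of $\mu$, i.e.\ the coupling of the detachment ODE \eqref{ODEmean} with the refined estimate \eqref{MZ:sharp'}.
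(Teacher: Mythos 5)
Your overall architecture reproduces the paper's: the mean-value ODE \eqref{ODEmean} giving linear detachment of $(\ph_\lambda)_\Omega$ from $-1$ (Lemma~\ref{lem:mean}), the refined inequality \eqref{MZ:sharp'} combined with testing the chemical-potential relation by $\ph_\lambda-(\ph_\lambda)_\Omega$, the balancing of the singular weight $t^{-1}$ against \eqref{F:abs:5} which yields $\mu$ only in $L^p(0,T;V)$ for $p<2$ but in $L^2(\sigma,T;V)$ for $\sigma>0$, the $L^2(0,T;V)$ bound on $\ph_\lambda$ via the coercivity of \ref{ass:kernel}, and the limit passage via Aubin--Lions and maximal monotonicity. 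The one structural departure is the approximation itself: you replace $\beta$ by its Moreau--Yosida regularization and keep the pure-phase datum $\ph_\lambda(0)=-1$, whereas the paper keeps the singular potential untouched and only shifts the initial datum to $\ph_{0,\lambda}=-1+\lambda$, invoking known existence theory (in the nonlocal case, adapting \cite[Sec.~3]{FG}) at each fixed $\lambda$.

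This departure creates a genuine gap. The paper's scheme guarantees $-1\le\ph_\lambda\le1$ a.e.\ at every approximation level, and that pointwise bound is load-bearing in exactly the step you call the keystone: the lower bound in the mean analysis rests on $\big((\overline m-m)\ph_\lambda\big)_\Omega\ge -\overline m+m_\Omega$, which uses $\ph_\lambda\ge-1$ and $0\le m\le\overline m$; the upper bound $(\ph_\lambda)_\Omega\le 1-\lambda_0$, needed to apply \eqref{MZ:std} away from $t=0$, uses $|\ph_\lambda|\le1$; and the uniform boundedness of $S$ uses $|\ph_\lambda|\le 1$ together with $|h|\le\overline h$ on $[-1,1]$. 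Under Yosida regularization none of this is available at fixed $\lambda$: $\ph_\lambda$ is unconstrained, so your differential inequality $z_\lambda'\ge(m_\Omega-\overline h)-\overline m z_\lambda$, which you justify by ``$\ph_\lambda\ge-1$ (its approximate analogue)'', is unproved. The natural substitute --- a control of $\int_\Omega(\ph_\lambda+1)^-$ of order $\lambda$ --- would come from the $L^1$ bound on $\beta_\lambda(\ph_\lambda)$, but that bound is precisely what the detachment estimate is needed to establish, so the argument as written is circular. Two further points need attention in your scheme: \eqref{MZ:sharp'} is stated and proved only for $\phi$ with values in $(-1,1)$, so using it for an unconstrained $\ph_\lambda$ requires extending the pointwise inequality \eqref{MZ:sharp} for $\beta_\lambda$ to all $r\in\erre$ with $\lambda$-independent constants (feasible, since $\beta_\lambda$ is monotone, vanishes at $0$, and $|\beta_\lambda|\le|\beta|$ on $(-1,1)$, but it must be done); and the coercivity $a_*+F''\ge C_0$ of \ref{ass:kernel} must be checked to survive the replacement of $\beta'$ by $\beta_\lambda'$, which is not automatic. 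Your sign heuristic that $(\mu_\lambda)_\Omega(S)_\Omega$ is favorable near $t=0$ is also not rigorous as stated, since $(\mu_\lambda)_\Omega$ carries $\pi(\ph_\lambda)+g$ as well --- though this is harmless, as you anyway fall back on the paper's actual mechanism. The cleanest repair is the paper's choice: retain the singular $F$ and approximate only the datum by $-1+\lambda$, so that $|\ph_\lambda|\le1$ holds for free and the remaining steps go through exactly as you describe.
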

\begin{remark}
It is worth mentioning that the standard assumptions
$\varphi_0\in V$, $F(\varphi_0)\in L^1(\Omega)$,
and $(\ph_0)_\Omega \in (-1,1)$ entail that all regularities in the above theorems also hold for
$\sigma=0$ as the usual approach can be applied.
The same goes if the initial condition $\ph(0)\equiv-1$ is approximated by
$\ph(0)=\ph_{0,\delta} :=-1 + \delta$ for some $\delta \in (0,1)$.
\end{remark}

\mod{Finally, assuming that $h $ is constant, we \anlast{establish} a continuous dependence estimate on the initial datum, which \anlast{may assume the value}  $-1$. 
Extending this result to more general reaction terms \anlast{remains} an open issue.
Under this additional assumption, we \anlast{obtain the following result.}}
\begin{theorem}[local case]\label{THM:UNI:LOCAL}
Suppose that \ref{ass:pot}--\ref{ass:S} hold and \anlastr{$h$ is independent of the phase variable $\ph$}.
Let $\varphi_{0j}\in V$ be such that $(\ph_{0j})_\Omega\in [-1,1)$, $j=1,2$
and denote by $\ph_j$ a weak solution originating from $\ph_{0j}$. Then, there exists
$C>0$, depending on $T$, $\Omega$, $F$, $m$, and $h$, such that
\begin{align}
	\non
	&
	\norma{(\ph_2 - \ph_1)(t)}^2_{-1}
	+ |(\ph_2-\ph_1)_\Omega(t)|
+ \int_0^t \Vert (\ph_2 - \ph_1)(s)\Vert^2_V \,\ds
	\\ & \quad 	   \label{loccd}
	\leq
	 C\Vert \ph_{02}-\ph_{01}\Vert^2_{-1}
+C|(\ph_{02})_\Omega-(\ph_{01})_\Omega|, \quad\forall\,t\in [0,T].
\end{align}
\end{theorem}
\begin{theorem}[nonlocal case]\label{THM:UNI:NONLOCAL}
Suppose that \ref{ass:pot}--\ref{ass:kernel} hold and \anlastr{$h$ is independent of the phase variable $\ph$}.
Let $\varphi_{0j}\in H$ be such that $(\ph_{0j})_\Omega\in [-1,1)$, $j=1,2$ and denote by $\ph_j$ a weak solution originating from $\ph_{0j}$. Then, there exists
$C>0$, depending on $T$, $\Omega$, $K$, $F$, $m$, and $h$, such that
\begin{align}
	\non
	& \norma{(\ph_2 - \ph_1)(t)}^2_{-1}
+|(\ph_2-\ph_1)_\Omega(t)|	
	+ \int_0^t \Vert (\ph_2 - \ph_1)(s)\Vert^2 \,\ds
	\\ & \quad \label{nonloccd}
	\leq
	 C\Vert \ph_{02}-\ph_{01}\Vert^2_{-1}
+C|(\ph_{02})_\Omega-(\ph_{01})_\Omega|	, \quad\forall\,t\in [0,T].
\end{align}
\end{theorem}
Thus, we can conclude that our problems are well posed, provided that \anlastr{the function $h$ does not depend on the phase variable $\ph$. This holds, in particular, for the Cahn–Hilliard–Oono equation (see \eqref{CHO}), where $h$ is a constant, as well as for the inpainting equation (see \eqref{Bert}).}

\section{Existence of weak solutions}
\label{existproof}
The strategy of the proofs of Theorem~\ref{THM:EX:LOCAL} and Theorem~\ref{THM:EX:NONLOCAL} is outlined here below.
The local and nonlocal cases slightly differ in the type of estimates and regularities we can obtain.
Indeed, the fourth--order nature of the local Cahn--Hilliard equation allows higher regularity with respect to the nonlocal one.

\begin{enumerate}[label={\bf (\Roman{*})}]
\item {\bf Approximation.}
\luca{The first step consists in considering suitable approximate problems. Both in
the local and the nonlocal case, the approximation only consists in shifting to the right
the pure phase initial datum $-1$, so that we can recover the classical assumption
that the initial mass is strictly confined in the physical domain $(-1,1)$. More precisely,
for every approximation parameter $\lambda\in(0,1)$ one consider\rev{s} the family
of problems starting from the initial datum $\varphi_{0,\lambda}:=-1+\lambda$ instead,
and with respect to the same singular potential $F$.}

\item {\bf Preliminary estimate on the phase variable.} By exploiting the Cahn--Hilliard structure, a suitable preliminary, uniform with respect to $\lambda$, estimate on the phase variable $\ph_\lambda$ can be derived without involving the chemical potential $\mu_\lambda$. \luca{This is the first information to evaluate the rate at which $\varphi_\lambda$ reaches its initial
value for small times.}

\item {\bf \luca{Qualitative analysis of the mean value.}} Before proceeding with further uniform estimates, a control of the spatial mean of $\ph_\lambda$ is necessary.
\mau{In particular, in order to exploit \ref{PROP:MZ:std} and \ref{PROP:MZ:sharp},
one needs to prove that $(\varphi_\lambda)_\Omega$
remains detached from $-1$ for positive times, uniformly in $\lambda$.
This is not obvious as the initial datum only satisfies
$(\ph_\lambda(0))_\Omega=-1+\lambda$. The qualitative behavior of
$(\ph_\lambda)_\Omega$ is obtained via an analysis of the ODE in \eqref{ODEmean}. This allows to show that
the mean values instantly detaches from $-1$ at a linear rate, uniformly in $\lambda$.
}

\item {\bf Energy estimate.} By using the above information, the standard Cahn--Hilliard weak energy estimate can be now performed on the $\lambda$-approximation level. Namely, we can test the (approximated version of the) first equation \eqref{sys:1} (\eqref{sys:non:1} resp.) by $\mu_\lambda$ and the second \eqref{sys:2} (\eqref{sys:non:2} resp.) by $\dt \ph_\lambda$.
As is well known for Cahn--Hilliard-type systems, this procedure produces just a $L^2$-control of $\nabla \mu_\lambda$ and no information is available for $\mu_\lambda$ itself due the homogeneous Neumann boundary conditions. This issue is usually overcome by the Poincar\'e--Wirtinger inequality provided to show that $(\mu_\lambda)_\Omega$ is bounded in $L^2(0,T)$. Note that, in the present case, we also need to handle the inner product $(\rev{S(\cdot, \phl,g)},\mu_\lambda)$.
\luca{To this end,} comparison in the second equation \eqref{sys:2}  (\eqref{sys:non:2} resp.) reveals that this, roughly speaking, equals to controlling the singular term $\rev{\b}(\ph_\lambda)$ in $\L2 {L^1(\Omega)}$.
However, the singular nature of $\rev{\b}$ and the pathological initial condition $\phl(0)=-1$ prevent us to get such a bound through the usual approach based on \ref{PROP:MZ:std}. Nevertheless,
\luca{hinging on \ref{PROP:MZ:sharp},
we are able to suitably combine the qualitative estimates on the behavior of $(\varphi_\lambda)_\Omega$ with the blow-up of the nonlinearity
and to close the energy estimate.
This yields directly a first $L^2$-control of $\nabla \mu_\lambda$.
In order to gain a full control on the $H^1$-norm of $\mu_\lambda$
we are only left to show specific integrability properties of $(\mu_\lambda)_\Omega$.
These follow again by a sharp qualitative analysis of the behavior of
$(\varphi_\lambda)_\Omega$, making use of \ref{PROP:MZ:std}--\ref{PROP:MZ:sharp}.
The only drawback in having a pure phase initial datum results in a
slight loss of integrability of $\mu_\lambda$ close to the initial time.
Notice that such a loss of information is almost negligible
and seems reasonable to us in view of the initial pure phase.
Indeed, by comparing the classical scenario of
Cahn--Hilliard equations with ours,
assuming $g$ to fulfill either \eqref{ass:g_loc} or \eqref{ass:g_nloc},
it turns out that the correct spaces where one has to bound the chemical potentials
$\mu_\lambda$ are
\begin{align*}
  \underbrace{L^2(0,T; H^1(\Omega))}_{\text{in the classical setting}}
  \qquad\text{vs}\qquad
	\underbrace{\bigcap_{\s \in (0,T), \, {p \in (1,\ov p)}}L^2(\s,T; {H^1(\Omega)}) \cap L^ p(0,\sigma; {H^1(\Omega)}) }_{\text{in our setting}}.
\end{align*}
As noted before, the loss of integrability on the chemical potential
caused by the pure phase initial datum is extremely light, and absolutely
reasonable if taking into account the singular nature the problem.}

\item {\bf Passing to the limit.} \mau{The final step consists in realizing that all the previous estimates are independent of $\lambda$ so that weak and weak star compactness arguments can be exploited to rigorously justify the passage to the limit along with a suitable sequence  $\lambda_k\searrow 0$.
This procedure provides a \rev{limit} pair $(\ph,\mu)$ which fulfills suitable regularities and, in a suitable sense, yields a solution to the original problem \eqref{sys:1}--\eqref{sys:4} (\eqref{sys:non:1}--\eqref{sys:non:4} resp.).
We highlight that, as expected from the above lines, the time integrability of $\mu$ becomes the usual one, as soon as we detach from $t=0$, i.e., on every time interval of the form $(\sigma,T)$ for all $\s \in (0,T)$. 
}
\end{enumerate}

Since the above plan works for both the proofs of Theorem~\ref{THM:EX:LOCAL} and Theorem~\ref{THM:EX:NONLOCAL}, we are going to present them in a unified fashion, just pointing out the main differences between the local and nonlocal cases.
The proof will be divided into the aforementioned steps.

\subsection{Step (I)}

In what follows, we are going to indicate with $C$ a generic positive constant, independent of $\lambda$,
whose value may change from line to line.
We now fix $\l \in (0,1)$ \rev{and} introduce the $\lambda$-approximation of problems \eqref{sys:1}--\eqref{sys:4} and \eqref{sys:non:1}--\eqref{sys:non:4}, respectively.

\noindent
{\bf Local case.}
We consider the approximating problem
\begin{alignat}{2}
	\label{SYS:1:app}
	& \dt \ph_\lambda - \Delta\mu_\lambda
	= S(x,\ph_\lambda,g)
		&&\qquad \text{in $Q$},
		\\	\label{SYS:2:app}
	& \mu_\lambda =
	-\Delta \ph_\lambda
	+ F'(\ph_\lambda)
	 + g
		&&\qquad \text{in $Q$},
		\\
	\label{SYS:3:app}
	& \dn\ph_\lambda = \dn\mu_\lambda   =0
		&&\qquad \text{on $\Sigma$},\\
	\label{SYS:4:app}
	& \ph_\lambda(0)
	= {\ph}_{0,\lambda}:=-1 + \l
	&&\qquad \text{in $\Omega$}.
\end{alignat}
Thanks to \sub{previous} results \luca{(see, e.g.,~\cite{Mir})}, the above problem has \mau{a weak solution} $(\ph_\lambda, \m_\lambda)$  such that
\begin{align*}
	\ph_\lambda  \in H^1(0,T; V^*) \cap \L\infty V \cap \L2 {H^2(\Omega)},
	\quad
	\mu_\lambda  \in \L2 V.
\end{align*}
Moreover, due to the singularity of the potential $F$, we have $\phl \in L^\infty(Q)$ with
\begin{align*}
	-1 \leq \phl(x,t)\leq 1
	\quad \text{ for a.e.} \,(x,t) \in Q
\end{align*}
whence also the corresponding boundedness of $S$ using the structural assumptions in \ref{ass:S}.

\noindent
{\bf Nonlocal case.}
In this case, we consider the following approximating problem
\begin{alignat}{2}
	\label{SYS:1:app:nonloc}
	& \dt \ph_\lambda - \Delta\mu_\lambda
	= S(x,\ph_\lambda,g)
		&&\qquad \text{in $Q$},
		\\	\label{SYS:2:app:nonloc}
	& \mu_\lambda =
	a \ph_\lambda
	-  K*\ph_\lambda
	+ F'(\ph_\lambda)
		+ g
		&&\qquad \text{in $Q$},
		\\
	\label{SYS:3:app:nonloc}
	& \dn\mu_\lambda   =0
		&&\qquad \text{on $\Sigma$},\\
	\label{SYS:4:app:nonloc}
	& \ph_\lambda(0)
	:= {\ph}_{0,\lambda}=-1 + \l
	&&\qquad \text{in $\Omega$}.
\end{alignat}
\mau{We can adapt the argument
in \cite[Sec.~3]{FG} for the existence (see also references therein).
Summing up, there exists \mau{a weak solution}
$(\ph_\lambda, \m_\lambda)$
to the above problem, such that}
\begin{align*}
	\ph_\lambda  \in H^1(0,T; V^*)\cap \L\infty H \cap \L2 V,
	\quad
	\mu_\lambda  \in \L2 V.
\end{align*}
Moreover, arguing as above, it holds that $\phl \in L^\infty(Q)$ with
\begin{align*}
	-1 \leq \phl(x,t)\leq 1
	\quad \text{ for a.e.}\,(x,t) \in Q,
\end{align*}
and the corresponding boundedness of $S$.

\subsection{Step (II)}
Let us now establish some uniform a priori estimates for the phase field variable $\ph_\lambda$.

\noindent
{\bf Local case.}
Recalling \eqref{ass:g_loc}, \ref{ass:g}, and that $\ph_{0,\l}=-1+\l$,
we test equation \eqref{SYS:1:app} by $\ph_\lambda-\ph_{0,\l}=\varphi_\lambda+1-\l$,
equation \eqref{SYS:2:app} by
$-\Delta(\ph_\lambda-\ph_{0,\l})=-\Delta\varphi_\lambda$,
and add the resulting identities together to obtain that
\begin{align*}
	&\frac 12 \frac d{dt} \norma{\ph_\lambda-\ph_{0,\l}}^2
	+ \norma{\Delta \ph_\lambda}^2
	- \iO \b(\ph_\lambda) \Delta \ph_\lambda\\
	&\quad=
	\iO S(\cdot,\ph_\lambda,g) (\ph_\lambda-\ph_{0,\l})
	+ \iO \pi(\ph_\lambda)\Delta \ph_\lambda
	+  \iO g_0\Delta \ph_\lambda
	+  \iO \tilde g\Delta \ph_\lambda.
\end{align*}
As for the third term on the \lhs, integrating by parts and using the convexity of $\hat \b$, we get
\begin{align*}
	- \iO \b(\ph_\lambda) \Delta \ph_\lambda
	= \iO \b'(\ph_\lambda) |\nabla \ph_\lambda|^2 \geq 0.
\end{align*}
Moreover, thanks to the boundedness of $S$, the first term on the right-hand side
can be bounded as
\begin{align*}
	& \iO S(\cdot,\ph_\lambda,g)  (\ph_\lambda-\ph_{0,\l}) \leq
	C \norma{ \ph_\lambda-\ph_{0,\l}},
\end{align*}
while using \ref{ass:pot} and integration by parts, we get
\[
  \iO \pi(\ph_\lambda)\Delta \ph_\lambda
	\leq L_0\norma{\nabla\ph_\lambda}^2
	=L_0\norma{\nabla(\ph_\lambda-\ph_{0,\l})}^2
	\leq\frac14\norma{\Delta \ph_\lambda}^2
	+C\norma{ \ph_\lambda-\ph_{0,\l}}^2.
\]
Furthermore, recalling \ref{ass:g} and integrating by parts yield
\begin{equation}
\label{estg}
  \iO g_0\Delta \ph_\lambda
	+  \iO \tilde g\Delta \ph_\lambda
 \leq C\left(\norma{\nabla\varphi_\lambda}
 +\luca{t^{\alpha}\norma{\Delta\varphi_\lambda}}\right)
 \leq \frac14\norma{\Delta\varphi_\lambda}^2
 +C\left(\luca{t^{2\alpha}} + \norma{\nabla\varphi_\lambda}\right).
\end{equation}
Upon rearranging the terms, we obtain that
\begin{align*}
	&\frac 12 \sup_{r\in[0,t]}\norma{\ph_\lambda-\ph_{0,\l}}^2
	+ \frac 12\int_0^t\norma{\Delta \ph_\lambda(s)}^2\,\ds\\
	&\quad\leq \luca{Ct^{2\alpha+1}} + C\norma{\nabla\varphi_\lambda}_{L^1(0,t; H)} +
	 C \int_0^t(
	 \norma{ \ph_\lambda(s)-\ph_{0,\l}}
	 +\norma{ \ph_\lambda(s)-\ph_{0,\l}}^2)\,\ds\\
	 &\quad\leq
	 \luca{Ct^{2\alpha+1}} + Ct^{\frac34}\norma{\nabla\varphi_\lambda}_{L^4(0,t; H)}
	 +C \int_0^t\big(
	 \norma{ \ph_\lambda(s)-\ph_{0,\l}}
	 +\norma{ \ph_\lambda(s)-\ph_{0,\l}}^2\big)\,\ds.
\end{align*}
By interpolation and the Young inequality, it readily follows that
\begin{align*}
Ct^{\frac34}\norma{\nabla\varphi_\lambda}_{L^4(0,t; H)}
&=Ct^{\frac34}\norma{\nabla(\varphi_\lambda-\ph_{0,\l})}_{L^4(0,t; H)}\\
&\leq Ct^{\frac34}\norma{\Delta\varphi_\lambda}_{L^2(0,t; H)}^{\frac 12}
\norma{\varphi_\lambda-\ph_{0,\l}}_{L^\infty(0,t; H)}^{\frac 12}\\
&\leq Ct^{\frac32} + \frac1{2}\norma{\Delta\varphi_\lambda}_{L^2(0,t; H)}
\norma{\varphi_\lambda-\ph_{0,\l}}_{L^\infty(0,t; H)}\\
&\leq Ct^{\frac32}
 +\frac1{4}\norma{\Delta\varphi_\lambda}^2_{L^2(0,t; H)}
 +\frac14\norma{\varphi_\lambda-\ph_{0,\l}}_{L^\infty(0,t; H)}^2,
\end{align*}
from which, collecting all the terms, we deduce that
\begin{align}
\nonumber
	&\frac 14 \sup_{r\in[0,t]}\norma{\ph_\lambda(r)-\ph_{0,\l}}^2
	+ \frac 1{4}\int_0^t\norma{\Delta \ph_\lambda(s)}^2\,\ds\\
\label{step2:local}
	 &\quad\leq
	 \luca{C (t^{2\alpha+1} + t^{\frac32})}
	 +C \int_0^t\big(
	 \norma{ \ph_\lambda(s)-\ph_{0,\l}}
	 +\norma{ \ph_\lambda(s)-\ph_{0,\l}}^2\big)\,\ds.
\end{align}
Thus, Gronwall's lemma and elliptic regularity theory yield
\begin{align}\label{prelim:reg:local}
	\norma{\ph_\lambda}_{\L\infty H \cap \L2 W}
	\leq
	C.
\end{align}
Using this information,
we can go back to \eqref{step2:local} and bound the \rhs\ and infer that
\begin{align*}
	\norma{\ph_\lambda- \ph_{0,\l}}_{L^\infty (0,t;H) \cap L^2(0,t; W)}
	\leq
	C t^{1/2},
	\quad
	\forall t \in [0,T].
\end{align*}
\luca{Noting that $2\alpha+1\geq\frac32$ for some $\alpha\in(0,1)$,}
iterating the argument on the right-hand side of \eqref{step2:local}
yields
\begin{align}\label{est:alpha:local}
	\forall \alpha \in [0,3/4],
	\quad
	\exists\, C_\alpha>0:
	\quad
	\norma{\ph_\lambda- \ph_{0,\l}}_{L^\infty (0,t;H) \cap L^2(0,t; W)}
	\leq
	C_\alpha t^{\alpha},
	\quad
	\forall\,t \in [0,T],
\end{align}
where the constants $C_\alpha$ are independent of $\lambda$.
Let us note that if, besides \eqref{ass:g_loc}, it also holds that
$g_0\in W$, one can see that \maunew{(cf. \eqref{estg})}
\begin{align}
\label{stima:gzero}
  \iO g_0\Delta \ph_\lambda=\iO g_0\Delta (\ph_\lambda-\ph_{0,\l})
  =\iO \Delta g_0 (\ph_\lambda-\ph_{0,\l})
  \leq C\norma{\ph_\lambda-\ph_{0,\l}}.
\end{align}
Hence, the same argument {above allows us} to infer \eqref{est:alpha:local}, for every $\alpha\in[0,1)$.

\noindent
{\bf Nonlocal case.}
We can argue as above.
Namely, we test \eqref{SYS:1:app:nonloc} by $ \phl-\ph_{0,\l}$, the gradient of equation \eqref{SYS:2:app:nonloc} by $ {- \nabla (\phl - \ph_{0,\l})=}- \nabla \phl$, and add the resulting identities.
This gives
\begin{align*}
	& \frac 12 \frac d{dt} \norma{\ph_\lambda-\ph_{0,\l}}^2
	+ \iO F''(\ph_\lambda) |\nabla  \ph_\lambda|^2
	+ \iO \nabla (a \phl - K*\phl)\cdot \nabla  \ph_\lambda
	\\ & \quad
	=
	\iO S(\cdot,\ph_\lambda,g) (\ph_\lambda-\ph_{0,\l})
	- \iO \nabla g_0 \cdot \nabla \ph_\lambda
	- \iO \nabla \tilde g \cdot \nabla \ph_\lambda.
\end{align*}
We note straightaway that, being $\varphi_{0,\l}=-1+\l$,
by definition of $a$, one has that
\[
\iO \nabla (a \ph_{0,\l} - K*\ph_{0,\l})\cdot \nabla  \ph_\lambda
=\iO \nabla (-a +\l a+a -\l a)\cdot \nabla  \ph_\lambda = 0 .
\]
Taking this information into account,
the second and third terms on the \lhs\ can be bounded
by recalling \ref{ass:kernel} along with Young's inequality as
\begin{align*}
	& \iO F''(\ph_\lambda) |\nabla  \ph_\lambda|^2
	+ \iO \nabla (a \phl - K*\phl)\cdot \nabla  \ph_\lambda
	\\ & \quad
	=\iO F''(\ph_\lambda) |\nabla  \ph_\lambda|^2
	+ \iO \nabla {\big(}a (\phl-\ph_{0,\l}) - K*(\phl-\ph_{0,\l}){\big)}\cdot \nabla  \ph_\lambda
	\\ & \quad
	=
	\iO (F''(\ph_\lambda) + a) |\nabla  \ph_\lambda|^2
	+\iO  {\big(} (\nabla a) (\phl-\ph_{0,\l})  - (\nabla K )*(\phl-\ph_{0,\l}){\big)} \cdot \nabla  \phl
	\\
	& \quad \geq
	C_0  \norma{\nabla \phl}^2
	- 2b^* \norma{\phl-\ph_{0,\l}}\norma{\nabla \phl}
	\\
	& \quad \geq
	\frac  {C_0}2 \norma{\nabla \phl}^2
	-C\norma{\phl-\ph_{0,\l}}^2.
\end{align*}
On the other hand, recalling
\ref{ass:g}, \eqref{ass:g_nloc}, and using the boundedness of $S$,
integration by parts and the {Young} inequality, we obtain
\begin{align*}
	& \iO S(\cdot,\ph_\lambda,g) (\ph_\lambda-\ph_{0,\l})
	- \iO \nabla g_0 \cdot \nabla \ph_\lambda
	- \iO \nabla \tilde g \cdot \nabla \ph_\lambda
	\\ &\quad=
	 \iO S(\cdot,\ph_\lambda,g) (\ph_\lambda-\ph_{0,\l})
	+ \iO \Delta g_0  (\ph_\lambda-\ph_{0,\l})
	- \iO \nabla \tilde g \cdot \nabla \ph_\lambda
	\\ &\quad\leq
	C\norma{\phl-\ph_{0,\l}}+ \luca{Ct^\alpha}\norma{\nabla\phl}
	\\ &\quad\leq
	C\norma{\phl-\ph_{0,\l}} +
	\frac  {C_0}4 \norma{\nabla \phl}^2
	+ \luca{Ct^{\alpha+1}}.
\end{align*}
Thus, upon rearranging terms, we find
\begin{align}
\nonumber
	&\frac 12 \norma{\ph_\lambda(t)-\ph_{0,\l}}^2
	+ \frac {C_0}4\int_0^t\norma{\nabla \ph_\lambda(s)}^2\,\ds\\
	&\quad\leq \luca{Ct^{\alpha+1}}+
	 C \int_0^t\big(\norma{ \ph_\lambda(s)-\ph_{0,\l}}+
	 \norma{ \ph_\lambda(s)-\ph_{0,\l}}^2\big)\,\ds
\label{step2:nonlocal}
\end{align}
and Gronwall's lemma entails
\begin{align}\label{prelim:reg:nonlocal}
	\norma{\ph_\lambda }_{\L\infty H \cap \L2 V}
	\leq
	C.
\end{align}
Then, \luca{noting that $\alpha+1\in(1,2)$ for all $\alpha\in(0,1)$,}
we argue as before by iteration
to infer that
\begin{align}\label{est:alpha:nonlocal}
	\forall \,\alpha \in [0,1),
	\quad
	\exists\, C_\alpha>0:
	\quad
	\norma{\ph_\lambda - \ph_{0,\l}}_{L^\infty (0,t;H) \cap L^2(0,t; V)}
	\leq
	C_\alpha t^{\alpha},
	\quad
	\forall\, t \in [0,T],
\end{align}
where the constants $C_\alpha$ are independent of $\lambda$.

\subsection{Step (III)}
Unlike the previous steps, this one is independent of the choice of the local or nonlocal model as the mass evolution
of $\ph$ is always ruled by \rev{the first equation in \eqref{ODEmean}} with initial condition $\varphi_{0,\l}$.
More precisely, the aim of this step is to prove the following \rev{result.}
\begin{lemma}
\label{lem:mean}
  In the current setting, there exists
  a constant $c_0\in(0,1)$,
  depending only on $m$ and $h$,
  such that, \luca{for all $\lambda\in(0,1)$ and $t\in[0,T]$,}
  \begin{align}\label{claim:mean}
	-1 \leq -1 + c_0 (1- e^{-\overline mt})
	\leq (\phl(t))_\Omega
	\leq \luca{-1 + (2-c_0) (1-e^{-\overline m t}) +\lambda e^{-\overline mt}}.
\end{align}
In particular, there are constants \luca{$c_1,c_2>0$ and $\l_0 \in (0,1)$,
depending only on $m$ and $h$,} such that, for all $\l \in (0,\l_0)$,
  \begin{align}
  \label{claim:mean2}
	-1 +2c_1t
	\leq (\phl(t))_\Omega
	\luca{\leq -1 +\l +c_2(1-e^{-\overline m t}) \leq 1-\lambda_0<1}
	\qquad
	\forall \,t \in [0,T].
\end{align}
\end{lemma}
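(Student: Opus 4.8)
The plan is to reduce the whole statement to a scalar ODE comparison for the mean $y_\lambda(t):=(\phl(t))_\Omega$. First I would integrate the approximate equation \eqref{SYS:1:app} (respectively \eqref{SYS:1:app:nonloc}) over $\Omega$: since $\iO \Delta \mu_\lambda = \int_{\partial\Omega} \dn \mu_\lambda = 0$ by the no-flux condition \eqref{SYS:3:app} (resp.\ \eqref{SYS:3:app:nonloc}) — equivalently, testing the weak formulation with $v\equiv 1$ — dividing by $|\Omega|$ gives, exactly as in \eqref{ODEmean}, the absolutely continuous solution of the Cauchy problem $y_\lambda'(t)=(S(\cdot,\phl(t),g(t)))_\Omega$ with $y_\lambda(0)=-1+\lambda$. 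This identity is the same in the local and nonlocal cases, which is precisely why the present step can be carried out once for both.

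Next I would insert the splitting \eqref{source:splitting}, so that $(S)_\Omega = -\tfrac1{|\Omega|}\iO m\,\phl + \tfrac1{|\Omega|}\iO h(\cdot,\phl,g)$, and exploit the pointwise bounds $-1\le\phl\le1$, $0\le m\le\overline m$ together with $|h|\le\overline h$ from \eqref{source:splitting3}. The essential point, and the main obstacle, is that $m$ is non-constant and possibly degenerate (cf.\ \eqref{char}), so the drift cannot be identified with $-m_\Omega y_\lambda$: the correlation between $m$ and $\phl$ must be controlled by sign information alone. For the lower bound I would write $\phl=-1+(\phl+1)$ with $\phl+1\ge0$, giving $-\tfrac1{|\Omega|}\iO m\,\phl = m_\Omega-\tfrac1{|\Omega|}\iO m(\phl+1)\ge m_\Omega-\overline m\,(y_\lambda+1)$; for the upper bound I would instead write $\phl=1-(1-\phl)$ with $1-\phl\ge0$, giving $-\tfrac1{|\Omega|}\iO m\,\phl\le -m_\Omega+\overline m\,(1-y_\lambda)$. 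Together with $|\tfrac1{|\Omega|}\iO h|\le\overline h$ this yields the two-sided differential inequality
\begin{align*}
\overline m \bigl(c_0 - (y_\lambda+1)\bigr) \le y_\lambda'(t) \le \bigl(\overline m - m_\Omega + \overline h\bigr) - \overline m\, y_\lambda,
\end{align*}
where $c_0:=(m_\Omega-\overline h)/\overline m$. Assumption \eqref{source:splitting3}, i.e.\ $\overline h<m_\Omega$, is exactly what forces $c_0>0$, while $m_\Omega\le\overline m$ gives $c_0\le1$; replacing $c_0$ by a slightly smaller value if it equals $1$ (all subsequent bounds being monotone in $c_0$), we may assume $c_0\in(0,1)$, depending only on $m$ and $h$.

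Then I would close the argument by the elementary linear ODE comparison principle (Gronwall applied to $y_\lambda$ minus the solution of the linearized equation with the same initial datum). The comparison solutions are exactly the two envelopes in \eqref{claim:mean}; together with the trivial bound $y_\lambda\ge-1$ coming from $\phl\ge-1$, this produces
\begin{align*}
-1 + c_0\bigl(1 - e^{-\overline m t}\bigr) \le y_\lambda(t) \le -1 + (2-c_0)\bigl(1 - e^{-\overline m t}\bigr) + \lambda e^{-\overline m t},
\end{align*}
uniformly in $\lambda$, which is \eqref{claim:mean}.

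Finally, for \eqref{claim:mean2} I would extract a linear lower envelope: since $s\mapsto(1-e^{-s})/s$ is decreasing and positive, $1-e^{-\overline m t}\ge\frac{1-e^{-\overline m T}}{T}\,t$ on $[0,T]$, so the lower bound of \eqref{claim:mean} gives $y_\lambda(t)\ge-1+2c_1t$ with $2c_1:=c_0(1-e^{-\overline m T})/T>0$. For the upper bound I would set $c_2:=2-c_0\in(1,2)$ and use $\lambda e^{-\overline m t}\le\lambda$ to get $y_\lambda(t)\le-1+\lambda+c_2(1-e^{-\overline m t})$. The crucial observation is that the equilibrium of the upper comparison equation is $1-c_0<1$, so the envelope stays below $1-c_0$; concretely, bounding $1-e^{-\overline m t}\le1$ yields $y_\lambda(t)\le1+\lambda-c_0$, and choosing $\lambda_0:=c_0/2$ and restricting to $\lambda<\lambda_0$ forces $y_\lambda(t)\le1-c_0/2=1-\lambda_0<1$. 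This delivers \eqref{claim:mean2} with all constants independent of $\lambda$. The detachment from the pure phase at the strictly positive linear rate $2c_1$, together with the uniform strict upper barrier $1-\lambda_0$, is exactly the qualitative information on $(\phl)_\Omega$ that is needed to invoke \ref{PROP:MZ:std}--\ref{PROP:MZ:sharp} in the subsequent energy estimate.
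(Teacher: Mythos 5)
Your proposal is correct and follows essentially the same route as the paper: both reduce the claim to the ODE for the mean $y_\lambda$, exploit $|\phl|\leq 1$, $0\leq m\leq\overline m$, and $\overline h<m_\Omega$ to sandwich $y_\lambda$ between the same exponential envelopes with the same constants $c_0=(m_\Omega-\overline h)/\overline m$, $c_2=2-c_0$, $\lambda_0=c_0/2$. The only cosmetic difference is that you argue via a two-sided differential inequality and ODE comparison, while the paper writes the variation-of-constants (Duhamel) formula for $y_\lambda'+\overline m y_\lambda$ and bounds the integrand, which amounts to the same estimate.
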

\begin{proof}[Proof of Lemma~\ref{lem:mean}]
Integrating \eqref{SYS:1:app} (\eqref{SYS:1:app:nonloc} resp.) over $\Omega$ and multiplying by $1 /|\Omega|$ lead to a Cauchy problem for the mean of the phase variable $y_\l:=(\ph_\lambda)_\Omega$
(cf. \eqref{ODEmean})
\begin{align*}
		y_\l' = (S(\cdot,\ph_\lambda,g) )_\Omega,
		\quad
		y_\l(0)=-1+\l,
\end{align*}
Thus, recalling the boundedness of $S$, the associated Cauchy problem for $y_\l$
reads as
\begin{align*}
	\begin{cases}
	y'_\l + (m{(\cdot)}\phl)_\Omega = (h(\cdot,\phl,g))_\Omega \quad \text{in $(0,T)$},
	\\
	y_\l(0)=-1+\l,
	\end{cases}
\end{align*}
which can be equivalently {reformulated} as
\begin{equation}\label{eq_mean_aux}
\begin{cases}
	y'_\l + \overline m y_\l = (h(\cdot,\phl,g))_\Omega +
	\overline m y_\l-({m(\cdot)}\phl)_\Omega \quad \text{in $(0,T)$},
	\\
	y_\l(0)=-1+\l.
	\end{cases}
\end{equation}
Recalling that $\overline m=\|m\|_{L^\infty(\Omega)}>0$ and noting that
\begin{align*}
	\overline m y_\l-({m(\cdot)}\phl)_\Omega
	=
	{\big((\ov m-m(\cdot))\phl\big)_\Omega},
\end{align*}
we have the solution formula
\begin{align}
  \non
  y_\lambda(t)& =(-1+\l)e^{-\overline m t} + \int_0^te^{-\overline m(t-s)}
  {\Big(}(h(\cdot,\phl(s),g(s)))_\Omega +
	((\overline m-m{(\cdot)})\phl(s))_\Omega
	{\Big)}\,\ds.
	\label{est:mean}
\end{align}
Let us focus on the estimate from below.
Recalling that $|h|\leq \overline h$,
we deduce that
\begin{align*}
  y_\lambda(t)+1
  &
  \geq(1  -e^{-\overline m t})
  +\l e^{-\overline m t}
  - \frac{\overline h}{\overline m}(1-e^{-\overline m t})
  +\int_0^te^{-\overline m(t-s)}
  ((\overline m-m{(\cdot)})\phl(s))_\Omega\,\ds.
\end{align*}
Now, since $|\phl|\leq1$ and $0\leq m\leq \overline m$ in $\Omega${,}
we have
\begin{align*}
  ((\overline m-m{(\cdot)})\phl)_\Omega
  &=\frac1{|\Omega|}\int_\Omega(\overline m-m{(\cdot)})\phl
  \geq-\frac1{|\Omega|}\int_\Omega(\overline m -m{(\cdot)})
  =-\overline m + m_\Omega.
\end{align*}
Putting everything together, we infer that
\begin{align*}
y_\lambda(t) + 1  &\geq {\Big(}
1 -\frac{\overline h}{\overline m}
-1
+\frac{m_\Omega}{\overline m}
\Big)
(1-e^{-\overline m t})
=\frac1{\overline m}{\Big(}
m_\Omega-\overline h
{\Big)}
(1-e^{-\overline m t}).
\end{align*}
Since $\overline h<m_\Omega\leq \overline m$,
the estimate from below in \eqref{claim:mean}
follows with the choice
\[
  c_0:=\frac{m_\Omega-\overline h}{\overline m}\in(0,1).
\]
Let us focus now on the estimate from above.
Recalling that $|h|\leq \overline h$, $|\phl|\leq1$, and
$0\leq m\leq\overline m$ in $\Omega$, we infer that
\begin{align*}
  \luca{y_\lambda(t)+1}&\leq \luca{(1 -e^{-\overline m t})}
  +\l e^{-\overline m t}+
  \frac{\overline h}{\overline m}(1-e^{-\overline mt})
  +\Big(\frac{\overline m - m_\Omega}{\overline m}
  \Big)(1-e^{-\overline mt})\\
  &=\luca{\left(2-\frac{m_\Omega-\overline h}{\overline m}\right)(1-e^{-\overline m t})
  +\lambda e^{-\overline mt}}
  \\ &
  = \luca{(2-c_0)(1-e^{-\overline mt})
  +\lambda e^{-\overline mt}},
\end{align*}
and \luca{\eqref{claim:mean} is proved.
The second inequality in \eqref{claim:mean} is then a direct consequence
of \eqref{claim:mean} by taking, for instance,
$\l_0 := \frac{c_0}2$.}
\end{proof}

\begin{remark}
\label{REM:regsigma}
We highlight that Lemma~\ref{lem:mean} ensures that
\begin{align*}
	\luca{(\phl(t))_\Omega \in [-1,1), \quad 	\forall \,t\in [0,T].}
\end{align*}
This will be a crucial property to motivate the interpretation of  the chemical potential $\ml$ for small times in the limit $\l\to0$.
Besides, {as a} straightforward consequence of the above analysis,
{we have that}
\begin{align}\label{mean:dopozero}
	\forall\, \s \in (0,T), \quad \exists\, c_\s \in(0,1) : \quad
	-1+c_\s
	\leq
	(\phl(t))_\Omega
	<
	1-c_\s,
	\quad
	\forall\,t \in [\s,T],
\end{align}
meaning that for every positive time $\sigma${,} the mass of the phase variable $\phl$ is confined in the physical interval $(-1,1)$ {independently of $\l$}.
This is the standard condition that is usually required in the Cahn--Hilliard literature to infer from inequality \eqref{MZ:std} that $\b(\phl) \in L^2(\sigma,T; {\Lx1})$.
Hence, from comparison in \eqref{sys:2} (\eqref{sys:non:2} resp.), it
follows that $(\ml)_\Omega \in L^2(\sigma,T)$ and, via the Poincar\'e--Wirtinger inequality and the above energy estimate, one recovers that $ \ml \in L^2(\sigma,T;V)$.
\end{remark}

\subsection{Step (IV)}
This step is devoted to the derivation of the energy estimates.
The idea is to combine the classical energy estimate \luca{with the enhanced version
\eqref{MZ:sharp} of the inequality \eqref{MZ:std}. This enables to derive insights into the $L^1$-spatial norm of the nonlinear term $\beta(\varphi_\lambda)$ by the use}
of the evolution properties of the mean of $\phl$ and the preliminary estimate.

\noindent
{\bf Local case.}
We test \eqref{SYS:1:app} by $\ml$, \eqref{SYS:2:app} by $- \dt \phl$,
add the resulting equalities, and integrate in time, for any arbitrary $t \in (0,T]$. This gives
\begin{align}
	&\non
	\frac 12 \norma{\nabla \phl(t)}^2  + \iO F(\phl(t))
	+\int_0^t \norma{\nabla \ml(s)}^2\,\ds \\
	&\quad=|\Omega|F(-1+\l)+
	\int_0^t\iO S(\cdot,\phl(s),g(s)) \ml(s)\,\ds
	\luca{- \int_0^t\iO g(s) \dt \phl(s)\,\ds}.
	\label{est:energy:1}
\end{align}
It readily follows from the previous estimates that
\begin{align*}
	\iO F(\phl(t))
	\geq
	\iO \hat \b(\phl(t))
	- C.
\end{align*}
As for the last term on the right-hand side, we point out that the notation is only formal.
More precisely, recalling \ref{ass:g} and using again that $\ph_\l(0)=-1+\l$, the formal integral is rigorously defined as follows
\begin{align*}
\luca{-\int_0^t\iO g(s) \dt \phl(s)\,\ds}
& \luca{=-\int_0^t\langle\partial_t\ph_\lambda(s), g_0+g_1(s)\rangle_{V}\,\ds
-\int_\Omega g_2(t)\varphi_\lambda(t)}
\\ &\quad \quad
\luca{- \int_\Omega g_2(0)(1-\l)
+\int_0^t\iO \partial_tg_2(s) \phl(s)\,\ds}.
\end{align*}
By comparison in \eqref{SYS:1:app}, using the boundedness of $S$,
\ref{ass:g}, and \eqref{prelim:reg:local}, thanks to the Young inequality, we infer that
\[
  \luca{-\int_0^t\iO g(s) \dt \phl(s)\,\ds}
  \leq\frac14\int_0^t \norma{\nabla \ml(s)}^2\,\ds + C.
\]
Moreover, using again the boundedness of $S$, \eqref{SYS:2:app}, and
\eqref{prelim:reg:local}, we find
\begin{align*}
	& \int_0^t\iO S(\cdot,\phl(s),g(s)) \ml(s)\,\ds\\
	&\quad= \int_0^t\iO S(\cdot,\phl(s),g(s)) (\ml(s)- (\ml(s))_\Omega) \,\ds
	+ \int_0^t\iO S(\cdot,\phl(s),g(s)) (\ml(s))_\Omega \,\ds\\
	&\quad\leq
	\frac 14 \iot \norma{\nabla \ml(s)}^2 \,\ds
	+ C
	+ C \iot |(\ml(s))_\Omega|.
\end{align*}
Observe now that, by comparison in equation \eqref{SYS:2:app}, \luca{thanks to
{\bf A2} and \eqref{prelim:reg:local}, it holds
\begin{align*}
	|\Omega||(\ml)_\Omega|
	& \leq \norma{\b(\phl)}_1 + \norma{\pi(\phl)}_1 + \norma{g}_1
	\leq \norma{\b(\phl)}_1 + C .
\end{align*}
Putting everything together, we infer that there exists a constant $C_*>0$,
independent of $\lambda$, such that, for every $t\in[0,T]$,
\begin{align}
	 \norma{\nabla \phl(t)}^2  +
	 \rev{\|{\widehat\beta(\varphi_\lambda(t))}\|_1}
	+\int_0^t \norma{\nabla \ml(s)}^2\,\ds
	&\leq C_* + C_*\int_0^t\|\beta(\varphi_\lambda(s))\|_{1}\ds\rev{.}
	\label{est:energy:1'}
\end{align}
}
\luca{Now, we go back to equation \eqref{SYS:2:app} tested by $\phl - (\phl)_\Omega$
and get
\begin{align*}
	& \norma{\nabla\phl}^2+
	\int_\Omega\beta(\phl)(\phl-(\phl)_\Omega)	
	\leq
	\iO \ml 	\big(\phl - (\phl)_\Omega\big)
	 	 - \iO (\pi(\phl)+g)	\big(\phl - (\phl)_\Omega\big).
\end{align*}
By exploiting the fact that
$\varphi_{0,\lambda}=(\varphi_{0,\lambda})_\Omega=-1+\lambda$,
using the Poincar\'e--Wirtinger inequality, and assumptions {\bf A1--A2},
one has that
\begin{align*}
	&
	\iO \ml 	\big(\phl- (\phl)_\Omega\big)
	 - \iO (\pi(\phl)+g)	\big(\phl- (\phl)_\Omega\big)
	 \\& \quad
	 =
	 \iO (\ml - (\ml)_\Omega) 	\big(\phl- (\phl)_\Omega\big)
	 	- \iO (\pi(\phl) + g)	\big(\phl- (\phl)_\Omega\big)
	 \\ & \quad
	 =
	\iO (\ml - (\ml)_\Omega) 	\big((\phl-\ph_{0,\l}) - ( \phl - (\ph_{0,\l} ))_\Omega\big)
	 	\\ & \qquad
	 	- \iO (\pi(\phl) + g)	\big((\phl-\ph_{0,\l}) - ( \phl - (\ph_{0,\l} ))_\Omega\big)
	 \\ & \quad
	 \leq C \norma{\nabla\ml}\norma{\phl- \ph_{0,\l}}
	 + C(\norma{\phl}+1)\norma{\phl- \ph_{0,\l}}.
\end{align*}
\mod{Here we have used the identity $\iO (\ml)_\Omega 	\big(\phl- (\phl)_\Omega\big)=(\ml)_\Omega \iO 	\big(\phl- (\phl)_\Omega\big)=0$ as well as the fact that \(\ph_{0,\l}\) is constant and, therefore, coincides with its mean value \((\ph_{0,\l})_\Omega\).}
Hence, thanks to the estimate \eqref{prelim:reg:local} we deduce that
\begin{align}
\label{est:energy:2'}
	\int_\Omega\beta(\phl(s))(\phl(s)-(\phl(s))_\Omega)	
	\leq C (1+\norma{\nabla\ml(s)})\norma{\phl(s)- \ph_{0,\l}}
	\quad\forall\,s\in[0,T].
\end{align}
}

\luca{Next, we note that \eqref{claim:mean} and \eqref{claim:mean2} imply that
there exists $T_0\in(0,T)$ and $r_0\in(0,1)$, both independent of $\lambda$, such that,
for every $\lambda\in(0,\lambda_0)$, it holds that
$-1<(\varphi_\lambda(s))_\Omega<-1+r_0$
for all $s\in(0,T_0)$ and $-1+r_0\leq(\varphi_\lambda(s))_\Omega\leq1-r_0$
for all $s\in[T_0,T]$. Consequently, for $s\in(0,T_0)$
we can make use of estimate \eqref{MZ:sharp'} with $\phi=\phl(s)$
and infer that
\begin{align}
\nonumber
	  &c_\b ((\phl(s))_\Omega+1) \int_\Omega|\b(\phl(s))|\\
\nonumber
	  & \quad \leq
  \int_\Omega\beta(\phl(s))(\phl(s)-(\phl(s))_\Omega)
  \\ & \qquad
  \label{ineq_luca1}
  +C_\beta((\phl(s))_\Omega+1)
  |\b(-1+((\phl(s))_\Omega+1)/2)|
  \qquad\forall\,s\in(0,T_0).
\end{align}
Analogously, for $s\in[T_0,T]$ we can make use
of estimate \eqref{MZ:std}
and infer that
\begin{align}
\label{ineq_luca2}
	  c_* \int_\Omega|\b(\phl(s))|
	  & \leq
  \int_\Omega\beta(\phl(s))(\phl(s)-(\phl(s))_\Omega)
  +C_*|\Omega| \qquad\forall\,s\in[T_0,T].
\end{align}
Consequently, we can combine \eqref{est:energy:2'} evaluated at $s\in(0,T_0)$
with inequality \eqref{ineq_luca1}, divide by $(\phl(s))_\Omega+1$,
and exploit the estimates \eqref{est:alpha:local} and \eqref{claim:mean2}
to get, for all $s\in(0,T_0)$ and $\alpha\in[0,\frac34]$
(resp.~$\alpha\in(0,1)$ if $g_0\in W$),
\begin{align}
\nonumber
  c_\beta\|\beta(\varphi_\lambda(s))\|_1 &\leq
  C\frac{\norma{\phl(s)- \ph_{0,\l}}}{(\phl(s))_\Omega+1}
  (1+\norma{\nabla\ml(s)}) +
  C_\beta|\b(-1+((\phl(s))_\Omega+1)/2)|\\
\label{ineq_luca1'}
  &\leq \frac{CC_\alpha}{2c_1}s^{\alpha-1}(1+\norma{\nabla\ml(s)})
  +C_\beta|\b(-1+c_1s)|.
\end{align}
Analogously, if we combine \eqref{est:energy:2'} evaluated at $s\in[T_0,T]$
with inequality \eqref{ineq_luca1} and estimate \eqref{prelim:reg:local}, we get,
for all $s\in[T_0,T]$,
\begin{align}
\label{ineq_luca2'}
  c_*\|\beta(\varphi_\lambda(s))\|_1 \leq
  C(1+\norma{\nabla\ml(s)})+
  C_*|\Omega|.
\end{align}
Collecting the above estimates, integrating with respect to $s\in(0,t)$, and using the Young inequality,
for all $\delta>0$, we find a constant $C_\delta>0$, independent of $\lambda$,
such that, for every $t\in[0,T]$,
\begin{align*}
  \int_0^t\|\beta(\varphi_\lambda(s))\|_1\ds
  \leq \delta\int_0^t\norma{\nabla\ml(s)}^2\ds+C_\delta+
  C_\delta\int_0^{T_0}s^{2(\alpha-1)}\ds +
  C\int_0^{T_0}|\b(-1+c_1s)|\ds.
\end{align*}
As for the right-hand side, the last term
makes sense by {\bf A1} as
\begin{align*}
  \int_{0}^{T_0}|\beta({-1 + c_1s)}|\,\ds
  =\frac1{{c_1}}\int_{-1 }^{-1+ c_1 T_0}
  |\beta(s)|\,\ds\leq
  C\norma{\beta}_{L^1(-1,0)} \leq C.
\end{align*}
By choosing then $\alpha\in(\frac12,\frac34]$
(resp.~$\alpha\in(\frac12,1)$ if $g_0\in W$),
we infer then that
\begin{align}
\label{est:energy:3'}
  \int_0^t\|\beta(\varphi_\lambda(s))\|_1\ds
  \leq \delta\int_0^t\norma{\nabla\ml(s)}^2\ds+C_\delta
  \quad\forall\,t\in[0,T].
\end{align}
}

\luca{Eventually, recalling that $C_*$ is the constant appearing in \eqref{est:energy:1'},
we multiply \eqref{est:energy:3'} by $2C*$ and sum it with
inequality \eqref{est:energy:1'}. By choosing $\delta$ small enough,
for instance $\delta:=\frac1{4C_*}$, and rearranging the terms, we deduce the bound}
\begin{align}
	&
	\norma{\phl}_{\L\infty V}
	+ \|\hat \b(\phl)\|_{\L\infty {\Lx1}}
	+ \| \b(\phl)\|_{\L 1{\Lx1}}
	+ \norma{\nabla \ml}_{\L2 H}
	\leq C.
\label{en:est:local}
\end{align}
From this, it is a standard matter to derive by comparison in equation \eqref{SYS:1:app} that
\begin{align}
	\label{est:dt}
	\norma{{\dt}\phl}_{{\L2 \Vp}}
	\leq C.
\end{align}
Once the above is established, we go back to
\luca{\eqref{ineq_luca1'}--\eqref{ineq_luca2'},
take square powers, and exploit the fact that now
$t \mapsto \norma{\nabla \ml(t)}$ is bounded in $L^2(0,T)$.
To this end, we note that by {\bf A1} one has
\begin{align*}
  \int_{0}^{T_0}|\beta({-1 + c_1s)}|^2\,\ds
  =\frac1{{c_1}}\int_{-1 }^{-1+ c_1 T_0}
  |\beta(s)|^2\,\ds\leq
  C\norma{\beta}_{L^2(-1,0)}^2 \leq C,
\end{align*}
while for every $\ell\in(1,4)$ (resp.~for every $\ell\in(1,+\infty)$ if $g_0\in W$)
there exists $\alpha\in (0,\frac34]$ (resp.~$\alpha\in (0,1)$ if $g_0\in W$)
such that ${\ell}(\alpha-1)>-1$.}
Hence, for every such $\ell$,
there exists also a constant $C_\ell>0$, independent of $\lambda$, such that
\[
  \norma{t\mapsto {t^{\alpha -1}}}_{L^\ell(0,T_0)}\leq C_\ell.
\]
Thus, going back to \eqref{ineq_luca1'}--\eqref{ineq_luca2'}, we can now infer that
\begin{align*}
\forall\,p& \in(1,\ov p), \quad\exists\,C_p>0:\quad	
\luca{\norma{\beta (\phl)}_{L^p(0,T; L^1(\Omega))}
	\leq C_p,}
	\end{align*}
with $\ov p$ being defined in \eqref{def:bp}.
Next, comparison in equation \eqref{SYS:2:app} readily produces
\begin{align*}
\forall\,p& \in(1,\ov p), \quad\exists\,C_p>0:\quad	\norma{(\ml)_\Omega}_{L^p(0,T)}
	\leq C_p,
	\end{align*}
whence, using the Poincar\'e inequality, that
\begin{align}
	\label{chem:est}
\forall\,p& \in(1,\ov p), \quad\exists\,C_p>0:\quad
	\norma{\ml}_{\L p {V}}
	\leq C_p.
\end{align}
\luca{By monotonicity and comparison in \eqref{SYS:2:app} this leads
by classical techniques also that
\begin{align}
	\label{beta:est}
\forall\,p& \in(1,\ov p), \quad\exists\,C_p>0:\quad
	\norma{\beta(\varphi_\lambda)}_{\L p {H}}
	\leq C_p.
\end{align}
Lastly}, arguing  as in \cite[Lemmas 7.3-7.4]{GGW} and using interpolation, we read
\eqref{SYS:2:app} as a one-parameter family of time-dependent system of elliptic equation with maximal monotone nonlinearity $\b$ and infer that
\begin{align}
\label{est17}
  \forall\,p& \in(1,\ov p), \quad\exists\,C_p>0:\quad
  \norma{\phl}_{L^{2p}(0,T;H^2(\Omega)) \cap L^p(0,T; W^{2,\tre}(\Omega))}
	\leq C_p,
\end{align}
with $\gamma$ being defined in \eqref{def:bp}.

\noindent
{\bf Nonlocal case:}
We can argue as above.
Again, we test \eqref{SYS:1:app:nonloc} by $\ml$, \eqref{SYS:2:app:nonloc} by $- \dt \phl$ and add the resulting equalities. This produces
\begin{align*}
	&\frac 14\int_{\Omega \times \Omega}
	K(x-y)|\phl (t,x)- \phl(t,y)|^2 {\rm dx dy}  + \iO F(\phl(t))
	+\int_0^t \norma{\nabla \ml(s)}^2\,\ds \\
	&\quad=|\Omega|F(-1+\l)+
	\int_0^t\iO S(\cdot,\phl(s),g(s)) \ml(s)\,\ds
	- \int_0^t\iO g(s) \dt \phl(s)\,\ds,
\end{align*}
where the last term on the right-hand side
has to be interpreted rigorously as before. On account of \rev{\ref{ass:kernel}}, we have
\begin{align*}
	\int_{\Omega \times \Omega}
	K(x-y)|\phl (t,x)- \phl(t,y)|^2 {\rm dx dy}
	\leq 2|\Omega|a^*\norma{\phl - \ph_{\l,0}}_{L^\infty(0,T; H)}^2.
\end{align*}
Thus, reasoning as above and exploit\mod{ing} \eqref{prelim:reg:nonlocal}, we find that, for any $p\in(1,2)$, there exists
$C_p>0$ such that
\begin{align}
	\non
	&\norma{	\dt \phl}_{\L2 \Vp}
	+ \norma{\phl}_{\L\infty H}
		+ \|\hat \b(\phl)\|_{\L\infty {\Lx1}}
	+ \luca{\| \b(\phl)\|_{\L p{H}}}\\
	\label{en:est:nonlocal}
	&+\norma{\ml}_{\L p V}
		+ \norma{\nabla \ml}_{\L2 H} \leq C_p.
\end{align}

\subsection{Step (V)}
The final step consists in passing to the limit as $\l \to 0$ along a suitable subsequence.
Thus, everything that we are going to present from now on has to be in principle considered for a suitable subsequence.
However, in order to keep notation as light as possible,
we stipulate that every passage to the limit has to be intended this way.

\noindent
{\bf Local case.}
Accounting for \eqref{prelim:reg:local} and  \mod{\eqref{est:dt}},
we infer the existence of a function $\ph$ such that
\begin{align*}
	\ph &  \in \H1 {\Vp} \cap \L\infty V \cap \L2 {H^2(\Omega)},
\end{align*}
and that, as $\l \to 0$,
\begin{align*}
	\ph_\l &  \to \ph \quad \luca{\text{weakly in} \quad  \H1 {\Vp}  \cap
	\L2{H^2(\Omega)}},\\
	\ph_\l &  \to \ph \quad \luca{\text{weakly-star in} \quad   \L\infty V }\rev{.}
\end{align*}
The classical Aubin--Lions theorem then yields that, as $\l\to 0$,
\begin{align*}
		\ph_\l &  \to \ph \quad \text{strongly in} \quad  \C0 H \cap \L2 V
\end{align*}
so that the initial condition $\ph(0)=-1$ is fulfilled.
Our final goal is to pass to the limit, as $\l \to 0$, in the weak formulation associated to \eqref{SYS:1:app}--\eqref{SYS:4:app} to show
that  $\phl$ and $ \ml$ converge to some \rev{limit} $\ph$ and $\m $ that yield a solution to the weak formulation of problem \eqref{sys:1}--\eqref{sys:4}.
\luca{As far as $\beta$ is concerned, estimate \eqref{beta:est}, the strong
convergence of $\varphi_\lambda$, and the strong-weak
closure of $\beta$ as a maximal monotone graph in $\mathbb R\times\mathbb R$ yield that
\[
  \beta(\ph_\l)   \to \beta(\ph) \quad \text{weakly in} \quad  L^p(0,T; H)
  \quad\forall\,p\in(1,\ov p).
\]
Let us stress that this \rev{latter}   holds along a suitable subsequence
for every $p\in(1,\ov p)$, in the sense that the choice of the subsequence
can be done uniformly with respect to $p\in(1,\ov p)$.
This fact can be shown rigorously in the following way.
First, by exploiting the estimate \eqref{beta:est}
with \mod{$p \in (1,\ov p)$  (e.g., $p=\frac {\ov p+1} 2$}),
one can select a suitable subsequence along which it holds that
$\beta(\varphi_\lambda)\to\beta(\varphi)$ weakly in $L^{\frac{\ov p}2}(0,T; H)$.
Secondly, let us fix such subsequence and
prove that along such fixed subsequence it also holds that
$\beta(\varphi_\lambda)\to\beta(\varphi)$ weakly in $L^p(0,T; H)$
for all $p\in(1,\ov p)$. To this end, let $p\in(1,\ov p)$ be arbitrary
and let
$(\beta(\varphi_{\lambda_k}))_k$ be an arbitrary subsequence of
$(\beta(\varphi_{\lambda}))_\lambda$. Thanks to \eqref{beta:est}
it follows immediately that $(\beta(\varphi_{\lambda_k}))_k$ has a further
sub-subsequence $(\beta(\varphi_{\lambda_{k_j}}))_j$
weakly converging to some limit in $L^p(0,T; H)$ \rev{as $j\to0$}. As we know already that
$\beta(\varphi_\lambda)\to\beta(\varphi)$ weakly in $L^{\frac{\ov p}2}(0,T; H)$,
it necessarily holds that $\beta(\varphi_{\lambda_{k_j}})\to\beta(\varphi)$
weakly in $L^p(0,T; H)$ \rev{as $j\to0$}. This shows that every subsequence of
$(\beta(\varphi_{\lambda}))_\lambda$ admits a further sub-subsequence
weakly converging in $L^p(0,T; H)$ to $\beta(\varphi)$: this implies
that the whole sequence $(\beta(\varphi_{\lambda}))_\lambda$ weakly converges
to $\beta(\varphi)$ in $L^p(0,T; H)$  \rev{as $\l\to0$}, as desired.
As} for $\pi(\phl)$,  we use the above strong convergence of the order parameter, along with its \Lip\ continuity, to infer that
\begin{align*}
		\pi(\ph_\l)   \to \pi(\ph) \quad \text{strongly in} \quad  \C0 H\rev{,}
\end{align*}
and (see \eqref{source:splitting1})
\[
  m{(\cdot)}\phl   \to m{(\cdot)}\ph \quad \text{strongly in} \quad  \C0 H.
\]
Besides, using the continuity of $h$ with respect to its first argument, the boundedness of $h$,
and the dominated convergence theorem, we get that, as $\l \to 0$,
\begin{align*}
	S(\cdot,\phl,g) \to S(\cdot,\ph,g) \quad \text{strongly in} \quad  \L2 H.
\end{align*}
For the chemical potential we owe to \eqref{chem:est}  to infer that, as $\lambda\to0$,
\begin{alignat*}{2}
  \ml & \to\mu \quad&&\text{weakly in }
  L^p(0,T;V) \cap L^2(\sigma,T;V)
  \qquad\forall\,p\in(1,\ov p),\,\forall\,\sigma\in(0,T),
	\\
  \nabla\mu_\lambda &\to \nabla\mu \quad&&\text{weakly in } L^2(0,T;H).
\end{alignat*}
Exploiting again the strong convergence of $\phl$, we find from \eqref{est17} that
\begin{align}
	\ph \in  &\bigcap_{p\in(1,\ov p)}
	L^{2p}(0,T;H^2(\Omega)) \cap L^p(0,T; W^{2,\tre}(\Omega)),
	\label{loc:reg:ell}
\end{align}
with $\tre$ as in the statement. Observe that, arguing as above,
one can actually select the converging subsequences uniformly with respect to
$p$ and $\sigma$.
Next, we use all the above convergences to pass  to the limit $\l \to 0$ in the variational formulation associated to \eqref{SYS:1:app} and find that
\begin{align*}
	& \<\dt \ph ,v>_{V}
	+ \iO \nabla \mu \cdot \nabla v = \iO {S(\cdot,\ph,g)} v,
\end{align*}
for every test function $ v\in V$, and almost everywhere in $(0,T)$.
Testing \eqref{SYS:2:app} for an arbitrary test function $ v\in V$ and integrating by parts leads us to
\begin{align*}
		\iO  \ml v
		= \iO  \nabla \ph_\lambda \cdot \nabla v
		+ \iO  \b(\ph_\lambda) v
		+ \iO  \pi(\ph_\lambda)v
		+ \iO  g v,
		\quad
		\text{a.e. in $(0,T)$.}
\end{align*}
Exploiting the above convergences, we pass to the limit as $\l \to 0$ and get the boundary condition
\begin{align*}
	\dn \ph =0, \quad \text{a.e. on $\Sigma$,}
\end{align*}
along with the pointwise formulation
\begin{align*}
	 \mu = -\Delta \ph +  \b(\ph)+ \pi(\ph) +  g,
	\quad
	\text{a.e. in $Q$.}
\end{align*}
Thus $(\ph,\m)$ is a variational solution to the original problem \eqref{sys:1}--\eqref{sys:4} \luca{and the proof of Theorem~\ref{THM:EX:LOCAL} is finished.}

\noindent
{\bf Nonlocal case.}
The situation is similar to the previous case, so we just give a sketch of the proof.
From \eqref{prelim:reg:nonlocal} and \eqref{en:est:nonlocal} we obtain the existence of a function $\ph$ such that
\begin{align*}
	\ph \in  \H1 \Vp \cap \L\infty H \cap \L2 V,
\end{align*}
and, as $\l \to 0$,
\begin{alignat*}{2}
	\ph_\l &  \to \ph \quad &&\luca{\text{weakly in}
	\quad   \H1 \Vp \cap \L2 V,}
	\\
	\ph_\l &  \to \ph \quad &&\luca{\text{weakly-star in}
	\quad \L\infty H,}
	\\
	\ph_\l &  \to \ph \quad &&\text{strongly in} \quad \C0 H,
\end{alignat*}
which yields that the initial condition $\ph(0)=-1$ is fulfilled in $H$.
Here we have lower regularity with respect to the previous case. Nonetheless, the above strong convergence is enough to handle the nonlinear terms in the same way.
The only difference concerns the convolution kernel for which we have, on account of \ref{ass:kernel}, that, as $\l \to 0$,
\begin{align*}
		K * \ph_\l &  \to K * \ph \quad \text{strongly in} \quad  \L2V.
\end{align*}
The rest of the proof goes as above and no repetition is needed here.
The only difference is that, here, we can no longer infer the regularity properties \eqref{loc:reg:ell}.
\luca{This concludes the proof of Theorem~\ref{THM:EX:NONLOCAL}.}

\section{Uniqueness of weak solutions}
\label{uniqproof}
We will now proceed to demonstrate Theorems \ref{THM:UNI:LOCAL} and \ref{THM:UNI:NONLOCAL}. We will give the full details of the former proof only, since the latter proof goes along the same lines.

\begin{proof}[Proof of Theorem \ref{THM:UNI:LOCAL}]
First, we set $\ph=\ph_2 -\ph_1$ and consider the weak formulation for the difference that reads\mod{, recalling that $h$ is constant,} as
\begin{align}\label{cd:first:wf}
	& \<\dt \ph ,v>_{V}
	+ \iO \nabla \mu \cdot \nabla v = - \iO m(\cdot)\ph v \mod{,}
\end{align}
for every test function $ v\in V$, and almost everywhere in $(0,T)$, where
\begin{align*}
		\mu=  - \Delta \ph + F'(\ph_2) - F'(\ph_1), \quad & \text{$a.e.$ in $Q$}.
\end{align*}
Taking $v=|\Omega|^{-1}$ in  \eqref{cd:first:wf}, we get
\begin{align}	\label{mean:cd}
	\frac d {dt} \ph_\Omega
	= 	- (m (\cdot)\ph)_\Omega\mod{.}
\end{align}
Multiplying now the above equation by $v=\ph_\Omega + \sign \ph_\Omega$\mod{,} we obtain
\begin{align}	\label{cd:mean2}
	 \frac d{dt} \Big(\frac 12|{\ph_\Omega} |^2 + |\ph_\Omega|\Big)\leq C (|\ph_\Omega|^2 \mod{+|\ph_\Omega| }).
\end{align}
We then subtract equation \eqref{mean:cd} from \eqref{cd:first:wf}, test the resulting identity with $v=\NN(\ph- \ph_\Omega)$ and use the equation for the chemical potential to express the term $\iO \nabla \mu \cdot \nabla \NN(\ph- \ph_\Omega)$.
Upon rearranging some terms, this gives, for almost any $t\in (0,T)$,
\begin{align}	\non
	&\frac12\frac d {dt} \norma{\ph-\ph_\Omega}^2_{*}
	+ \norma{ \nabla \ph }^2  + \iO (\beta(\ph_2) - \beta(\ph_1))\ph\\
    &\non \quad = -\iO m(\cdot)\ph \NN(\ph-\ph_\Omega)
        - \iO (\pi(\ph_2) - \pi(\ph_1))(\ph- \ph_\Omega)
\\ & \qquad
    - \iO (\beta(\ph_2) - \beta(\ph_1))\ph_\Omega=: \sum_{i=1}^\mod{3} I_i.
    \label{cd:proof:est:1}
\end{align}
The third term on the \lhs\ is nonnegative due to the monotonicity of $\beta$, whereas the terms on the \rhs\ can be bounded as follows.
Recalling the interpolation inequality:
\begin{align*}
	\forall \delta >0,\quad \exists \,\cd>0 : \quad
	\norma{\ph}^2
	\leq \delta \norma{\nabla \ph}^2
	+ \cd \norma{\ph-\ph_\Omega}^2_*
	+ C |\ph_\Omega|^2,
\end{align*}
and using the Young inequality and the above estimate, we infer that
\begin{align*}
	I_1 + \mod{I_2 } & \leq
	C \norma{\ph}^2
	+ C\norma{\ph-\ph_\Omega}^2_*
	\leq
	\delta \norma{\nabla \ph}^2
	+ \cd \norma{\ph-\ph_\Omega}^2_*
	+ C |\ph_\Omega|^2,
	\\
	\mod{I_3} & \leq
	C (\norma{\beta (\ph_2)}_1 + \norma{\beta (\ph_1)}_1)|\ph_\Omega|.
\end{align*}
Next, we add \eqref{cd:proof:est:1} with  \eqref{cd:mean2} and observe that $t \mapsto C (\norma{\beta (\ph_2(t))}_1 + \norma{\beta (\ph_1(t))}_1)$ belongs to $L^1(0,T)$. Thus, integrating over time, choosing $\delta$ small enough, and applying Gronwall's lemma we deduce  \eqref{loccd}. This concludes the proof.
\end{proof}
\begin{proof}[Proof of Theorem \ref{THM:UNI:NONLOCAL}]
Here, we just emphasize the key differences with respect to the previous proof.
\mod{Also in this case, setting $\ph=\ph_2 -\ph_1$, we have (cf. \eqref{cd:first:wf})}
\begin{align*}
	& \<\dt \ph ,v>_{V} + \iO \nabla \mu \cdot \nabla v =
-\iO m(\cdot)\ph v \mod{,}
\end{align*}
for every test function $ v\in V$, and almost everywhere in $(0,T)$, where now
\begin{align}
\label{nlchempot}
	\mu=  a\ph - K* \ph + F'(\ph_2) - F'(\ph_1), \quad & \text{$a.e.$ in $Q$}.
\end{align}
Arguing as in the proof above, we derive
\eqref{mean:cd} and  \eqref{cd:mean2}.
We then repeat the previous computation with the test function $v=\NN(\ph- \ph_\Omega)$, using \eqref{nlchempot}.
Thus, for almost any $t\in (0,T)$, we get
\begin{align*}	\non
	&\frac 12 \frac d {dt} \norma{\ph-\ph_\Omega}^2_{*}
	+ \norma{  \ph }^2
     \leq \iO (K*\ph)(\ph-\ph_\Omega)
    +\iO a\ph(\ph-\ph_\Omega)
   \\ & \quad
    -\iO m(\cdot)\ph \NN(\ph-\ph_\Omega)
    \mod{- \iO (\pi(\ph_2) - \pi(\ph_1))(\ph- \ph_\Omega)}\\ & \quad
    - \iO (\beta(\ph_2) - \beta(\ph_1))\ph_\Omega,
\end{align*}
where the second term on the \lhs\ arises from $ \iO (a\ph + \beta(\ph_2)
- \beta(\ph_1))\ph$ using \ref{ass:pot}, \mod{and} \ref{ass:kernel}. Observe now that, using the Young inequality, we have
\begin{align*}
	\iO (K*\ph)(\ph-\ph_\Omega)
	& \leq
	\norma{K*\ph}_V\norma{\ph-\ph_\Omega}_*\leq\norma{K}_{W^{1,1}(\Omega)}\norma{\ph}\norma{\ph-\ph_\Omega}_*\\ & \leq \delta\norma{\ph}^2
	+\cd\norma{\ph-\ph_\Omega}_*^2,
\end{align*}
for a positive $\delta$ yet to be selected.
The remaining terms on the right-hand side can be readily controlled again by the Young inequality. Thus, we can proceed arguing as above
to get \eqref{nonloccd}.
\end{proof}

\section{\rev{A}pplication to tumor growth models}
\label{SEC:TUMOR}

This section is devoted to a nontrivial application of the existence results analyzed above in the context of tumor growth models.
The diffuse interface approach is nowadays broadly used in biological modeling, in particular, to describe (solid) tumor growth dynamics (see, e.g., \cite{CL, GLSS}).
The prototypical scenario that they try to capture may be schematized as follows: a tumor mass is embedded in a region rich of nutrient and it expands through a proliferation mechanism which is possible due to the consumption of the surrounding nutrient.
In the literature, there are several models, but their common denominator is a coupling between a Cahn--Hilliard equation with a source term, accounting for the evolution of the tumor mass,
with a reaction-diffusion equation for some nutrients species $n$. Therefore, problems \eqref{sys:1}--\eqref{sys:4} and \eqref{sys:non:1}--\eqref{sys:non:4} are related to those models.
The role of the source term is to encapsulate nontrivial interactions between the tumor cells and the nutrient such as the proliferation mechanism.
More precisely, in this context, the order parameter $\ph$ describes the presence of tumor cells with the convention that the level set $\{\ph=-1\}:= \{x \in \Omega: \ph(x)=-1\}$ describes the healthy region, whereas $\{\ph=1\}$ the tumor region.
The free energies \eqref{energy:local} and \eqref{energy:nonlocal} account for cell-to-cell adhesion effects, which may be of local or nonlocal nature, respectively.

Let us now introduce some meaningful models which fit the framework of the Cahn--Hilliard equations analyzed so far.
Usually, the evolution of the nutrient $n$ is ruled by a reaction-diffusion equation of the form
\begin{align}	\label{nutrient:abs}
		\dt n - \div \big(\eta(\ph)\nabla n \big) = S_n(\ph,n) \quad \text{in $Q$,}
\end{align}
endowed with no-flux or Robin boundary conditions as well as a suitable initial condition.
Here, $S_n(\ph,n)$ stands for a nutrient source term, and $\eta(\ph)$ for the nutrient mobility.
For more details on the modeling we refer, for instance, to \cite{GLSS} and references therein.

Thus, we can introduce the following initial and boundary value problems for tumor growth systems whose well-posedness readily frame in the setting of the above results
\begin{alignat}{2}	
	\label{eq:1:abs:tumor}
	& \dt \ph - \Delta (A_j \ph + F'(\ph) - \chi n) = S(\ph,n) \qquad &&\text{in $Q$,} \quad j=1,2,
	\\
	\label{eq:2:abs:tumor}
	& \dt n - \Delta n = S_n(\ph,n)\quad && \text{in $Q$,}
	\\
	\label{eq:3:abs:tumor}
	& \dn (A_j \ph + F'(\ph) - \chi n)= \dn n=  0 \qquad &&\text{on $\Sigma$,}
	\\
	\label{eq:4:abs:tumor}
	& \ph(0) = -1, \quad n(0)= n_0 \qquad &&\text{in $\Omega$,}
\end{alignat}
where $A_1$ is the usual (minus) Laplace operator with homogeneous Neumann boundary condition, while
$A_2$ is the linear nonlocal operator from $H$ to $H$ defined by \rev{$A_2 \ph = a \ph -K*\ph$}.
Besides, $\chi\geq  0$ denotes a nonnegative constant related to the chemotaxis sensitivity.
Typical choices for the source terms $S$ and $S_n$ are the following (see, e.g., \cite{ALL, AS}):
\begin{align}
	\label{defsource}
	S(\ph,n)=- m \ph + (n - \d_n)_+ \h(\ph),
	\quad
	S_n(\ph,n)=  {\cal B} (n_B - n) - {\cal C}   \h(\ph)n ,
\end{align}
for prescribed constants $m >0,$ and $\d_n \in [0,1]$. From now on, we postulate \eqref{defsource} for the source terms.
The capital letters ${\cal B}$ and ${\cal C}$ indicate nonnegative constants taking into account
consumption rate of the nutrient with respect to a pre-existing concentration $n_B$,
and the nutrient consumption rate, respectively.
Here,  $\h$ plays the role of a truncation so to have the action of ${\cal C} \h(\ph)n $ only in the spatial region where the tumor cells are located\rev{. Namely, we postulate} that
$\h(-1) = 0$, $\h(1) = 1$, and $\h$ is suitably interpolated in between if $-1< \ph < 1$.
Finally, the source term ${\cal B} (n_B - n) $ in \eqref{eq:2:abs:tumor} models the nutrient supply from the blood vessels if $n_B > n$ and the nutrient transport away from the
domain for $n_B< n$.

The novelty of our approach consists in enabling the initial condition $\ph_0$ to be chosen as the pure healthy phase.
We somehow show that even if  the tissue of the patient is completely  healthy at the beginning\rev{,} corresponding to $\ph_0=-1$, the presence of the disease
(captured by the structure of the source term) might trigger the evolution and the growth of the tumor.
Below, we provide existence results for the above local and nonlocal problems where the initial datum $\ph_0 $ can correspond to the healthy state, i.e., $\ph_0=-1$ in $\Omega$.

With no claim to be exhaustive, let us briefly review some related works dealing with singular potentials like \eqref{Flog}.
For those models, a crucial point is to handle the mass evolution; in particular one would avoid inconsistency of the model and have non physical solutions $\ph$ that satisfy $|(\ph(t))_\Omega | >1$ for some time $t \in[0,T]$.
To overcome this possible situation one usually proceed as we did and requires that the source term $S$, which rules the mass evolution of the order parameter, is given by \eqref{defsource}.
In this direction, we refer to \cite{GLRS, RSchS, FLRS}.
Alternatively, some authors suggest to introduce some regularization terms in the system to allow for more general potentials and source terms (see, for instance, \cite{CGH,SS} and the references therein).
Finally, let us point out that nonlocal models in the context of phase field tumor growth models are also investigated as well  (see, e.g., \cite{SS, FLS}).

As a direct consequence of the previous sections, we can deduce the validity of the following weak existence results.
\begin{theorem}\label{THM:TUMOR:LOCAL}
Suppose that \ref{ass:pot}--\ref{ass:S} hold.
Let the initial data fulfill
\begin{align*}
	& \ph_0 \in V,
	\quad
	F(\ph_0) \in \Lx1,
	\quad
	(\ph_0)_\Omega \in [-1,1],
	\quad n_0 \in V\cap \Lx\infty,
	\\
	&
	0 \leq n_0(x) \leq 1 \quad \text{for a.e.} x \in \Omega,
\end{align*}
and assume that $\h$ is  \anlast{continuous and} uniformly bounded, $\chi$ is a nonnegative constant,
and $n_B \in L^\infty(Q)$ with $0 \leq n_B \leq 1$ a.e. in $Q$.
Then, problem \eqref{eq:1:abs:tumor}--\eqref{eq:3:abs:tumor} with $j=1$
admits a weak solution $(\ph, \mu, n)$ such that
\begin{align*}
	& \ph \in \H1 \Vp \cap \L\infty V \cap \L2 W,
	\\
	& \ph \in \Ls4 W \cap \Ls2 {W^{2,\tre}(\Omega)} \quad \forall \sigma \in (0,T),
	\\
	& \ph \in \L {2p} W \cap \L {p} {W^{2,\tre}(\Omega)} \quad \forall p\in (1,\ov p),
	\\
	& \ph \in L^\infty(Q), \quad |\ph(x,t)| <1 \quad\text{ for a.a. } (x,t) \in Q,
	\\
	& \mu \in \L p  V \cap \Ls2 V \quad \forall p\in (1,\ov p) \quad  \forall \sigma \in (0,T),
    \\
    &\nabla \mu \in \L2 H,
    \\
    & n \in \H1 H \cap \L\infty V \cap \L2 W,
    \\
     & n \in L^\infty(Q), \quad 0 \leq n(x,t)\leq  1 \quad\text{ for a.a. } (x,t) \in Q,
\end{align*}
where
\[
\tre\in\begin{cases}
[1,6] \quad&\text{if } d=3,\\
[1,+\infty) \quad&\text{if } d=2,
\end{cases}
\qquad\qquad
\ov p:=
\begin{cases}
  \frac43 \quad&\text{if } n_0\in V\setminus W,\\
  2\quad&\text{if } n_0\in W,
\end{cases}
\]
where
\begin{align*}
		\mu=  - \Delta \ph + F'(\ph) - \chi n, \quad & \text{$a.e.$ in $Q$},
\end{align*}
and the following identities
\begin{align*}
	& \<\dt \ph ,v>_{V}
	+ \iO \nabla \mu \cdot \nabla v = \iO S(\ph,n) v,
	\\
	&\iO \dt n \, v
	+  \iO \nabla n \cdot \nabla v
	+ \iO \h(\ph) n v
	= {\cal B} \iO (n_B- n) v,
\end{align*}
are satisfied for every test function $ v\in V$, and almost everywhere in $(0,T)$.
Moreover, it holds that
\begin{align*}
	\ph(0)=\ph_0, \quad n(0)=n_0, \quad \text{a.e. in } \Omega.
\end{align*}
\end{theorem}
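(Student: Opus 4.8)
The statement is obtained by feeding the nutrient into the abstract Cahn--Hilliard theory of Section~\ref{existproof} and closing the coupling by a fixed-point argument. The guiding idea is that, once $n$ is regarded as given, the subsystem \eqref{eq:1:abs:tumor}, \eqref{eq:3:abs:tumor} with $j=1$ is exactly of the abstract form \eqref{sys:1}--\eqref{sys:3}, with $g:=-\chi n$ entering the chemical potential and $h(x,\ph,g):=(n-\delta_n)_+\h(\ph)$ playing the role of the reaction nonlinearity. Accordingly, I would fix $\bar\ph\in\C0 H$ with $|\bar\ph|\leq1$ a.e., solve the linear nutrient problem
\begin{align*}
	\dt n-\Delta n+\big({\cal B}+\h(\bar\ph)\big)n={\cal B}\,n_B,\qquad \dn n=0,\qquad n(0)=n_0,
\end{align*}
which admits a unique $n=n[\bar\ph]\in\H1 H\cap\L\infty V\cap\L2 W$ by standard parabolic theory and satisfies $0\leq n\leq1$ a.e.\ in $Q$ by the weak maximum principle (using the nonnegativity of $\h$, $0\leq n_0\leq1$ and $0\leq n_B\leq1$). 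Plugging this $n$ into Theorem~\ref{THM:EX:LOCAL} (and the remark following it, which covers the admissible data $\ph_0\in V$, $F(\ph_0)\in\Lx1$, $(\ph_0)_\Omega\in[-1,1]$) produces $(\ph,\mu)$ and defines a solution map $\Lambda\colon\bar\ph\mapsto\ph$, whose fixed points are precisely the weak solutions of the coupled system.

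The two hypotheses to be verified in order to invoke Theorem~\ref{THM:EX:LOCAL} are \ref{ass:g} and \ref{ass:S} for the effective data. Assumption \ref{ass:S} holds because the mobility $m>0$ is constant, so $m_\Omega=m>0$, while the a priori bound $0\leq n\leq1$ makes $h=(n-\delta_n)_+\h(\ph)$ bounded and continuous in $(\ph,n)$, the compatibility $\overline h<m_\Omega$ being part of the standing hypothesis \ref{ass:S}. For assumption \ref{ass:g} I would split $g=-\chi n$ as $g_0:=-\chi n_0$ and $\tilde g:=-\chi(n-n_0)$: the regularity of $n_0$ transfers to $g_0$, so that $g_0\in V$ (resp.\ $g_0\in W$) exactly when $n_0\in V\setminus W$ (resp.\ $n_0\in W$), which is what selects $\ov p$ in the statement, whereas $\tilde g\in H^1(0,T;H)\cap L^\infty(0,T;V)$ with $\tilde g(0)=0$. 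To run Schauder's theorem on a closed convex subset of $\C0 H$, I would use that the bound $0\leq n\leq1$ is independent of $\bar\ph$: this makes the norms of $g$ and of the source, hence all the constants produced in Steps (II)--(IV), uniform with respect to $\bar\ph$. Thus $\Lambda$ maps the set $\{|\bar\ph|\le1\}$ into a bounded subset of $\H1 \Vp\cap\L\infty V\cap\L2 W$, which is relatively compact in $\C0 H$ by the Aubin--Lions lemma; continuity of $\Lambda$ follows from the strong convergence $\h(\bar\ph_k)\to\h(\bar\ph)$ and the stability of the linear nutrient problem. When $\h$ is Lipschitz, $\Lambda$ is single-valued by Theorem~\ref{THM:UNI:LOCAL} and Schauder applies directly; otherwise a set-valued fixed-point theorem, or a simultaneous $\lambda$-approximation of the whole coupled system, leads to the same conclusion. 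The announced regularity of $n$ is then read off a posteriori from the fixed-point identity.

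The core difficulty is the verification of the initial-time decay \eqref{ass:g_loc} for $\tilde g=-\chi(n-n_0)$, i.e.\ the rate at which the nutrient leaves its datum in $H$, since this is the only genuinely nonstandard hypothesis of the abstract theorem. When $n_0\in W$ the parabolic regularization gives $\dt n\in L^\infty(0,T;H)$ near the origin, whence $\|n(t)-n_0\|\leq Ct$ and \eqref{ass:g_loc} holds for every $\alpha\in(0,1)$, in agreement with $\ov p=2$. When $n_0\in V\setminus W$, however, only $\dt n\in L^2(0,T;H)$ is available, and a direct estimate merely yields $\|n(t)-n_0\|\leq Ct^{1/2}$; this is the borderline case. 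Here I would observe that the exponent $\alpha=\tfrac12$ already suffices to reproduce, via the bootstrap of Step~(II), the preliminary bound \eqref{est:alpha:local} up to its limiting exponent $\tfrac34$ (the binding constraint there coming from the interpolation term, not from $\tilde g$), which is exactly what Step~(IV) requires to close the energy estimate and deliver $\ov p=\tfrac43$. The remaining care is to keep every such constant independent of $\bar\ph$ throughout the fixed-point iteration, again by means of the uniform bound $0\leq n\leq1$; the rest is a routine transcription of Section~\ref{existproof}.
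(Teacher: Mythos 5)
Your proposal is correct in substance but follows a genuinely different architecture from the paper's. The paper never decouples the system through a fixed point: it performs a Faedo--Galerkin discretization of the \emph{coupled} system \eqref{eq:1:abs:tumor}--\eqref{eq:4:abs:tumor} simultaneously with the $\lambda$-shift of the initial datum, derives the parabolic decay estimates for the discrete nutrient $n_k$ directly at the approximate level (testing by $n_k$, then by $\dt n_k$, then iterating as in Step~(II) to get $L^\infty(0,t;H)$-decay of order $t^\alpha$ uniformly in $k$), observes that this matches \ref{ass:g} with $g=-\chi n$, reruns the strategy of Section~\ref{existproof} with minor modifications, and passes to the limit first as $k\to\infty$ and then as $\lambda\to0$; the bounds $0\leq n\leq1$ are obtained as in \cite{SS}. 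Since everything rests on compactness at the approximate level, no uniqueness or continuity of any solution map is needed, which is precisely what allows the theorem to assume only that $\h$ is bounded (and continuous). Your Schauder scheme buys a cleaner modular structure --- Theorem~\ref{THM:EX:LOCAL} is used as a black box --- but pays exactly where you yourself flag it: $\Lambda$ is single-valued and continuous only under the Lipschitz condition \eqref{source:Lipcont}, which Theorem~\ref{THM:TUMOR:LOCAL} does not assume (it appears only in the remark on well-posedness), so in the general case you are forced into the set-valued or ``simultaneous approximation'' fallback, and the latter \emph{is} the paper's proof. On the other hand, your quantitative treatment of \ref{ass:g} is more careful than the paper's own sketch: you correctly observe that for $n_0\in V\setminus W$ only $\norma{n(t)-n_0}\leq Ct^{1/2}$ is available, so \eqref{ass:g_loc} (which demands every $\alpha\in(0,1)$) fails literally, and that one must rerun Step~(II) noting that $\alpha=\tfrac12$ already saturates the $t^{3/2}$ interpolation cap, still yielding \eqref{est:alpha:local} up to $\alpha=\tfrac34$ and hence $\ov p=\tfrac43$; the paper's sketch glosses over this distinction.

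One technical claim should be repaired: for $n_0\in W$ you assert $\dt n\in L^\infty(0,T;H)$ near $t=0$ by parabolic regularization. This does not follow here, since differentiating the nutrient equation in time would require time regularity of the coefficient $\h(\bar\ph)$, while $\bar\ph$ is merely continuous in time. The correct (and simpler) route is the one the paper uses for $g_0\in W$ in \eqref{stima:gzero}: test the nutrient equation by $n-n_0$, integrate by parts so that the datum enters through $\iO \Delta n_0\,(n-n_0)\leq C\norma{n-n_0}$, and bootstrap the resulting inequality exactly as in Step~(II) to obtain $\norma{n(t)-n_0}\leq C_\alpha t^\alpha$ for every $\alpha\in[0,1)$, which is what delivers $\ov p=2$. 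With this replacement, and with the fixed-point step either restricted to Lipschitz $\h$ or replaced by the coupled Galerkin/$\lambda$-approximation, your argument is complete and constants remain uniform in $\bar\ph$ thanks to $0\leq n\leq 1$, as you indicate.
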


\begin{theorem}\label{THM:TUMOR:NONLOCAL}
Suppose that \ref{ass:pot}--\ref{ass:kernel} hold.
Let the initial data fulfill
\begin{align*}
	& \ph_0 \in H,
	\quad
	F(\ph_0) \in \Lx1,
	\quad
	(\ph_0)_\Omega \in [-1,1),
	\quad n_0 \in W,
	\\
	&
	0 \leq n_0(x) \leq 1 \quad \text{for a.e. } x \in \Omega ,
\end{align*}
and assume that $\h$ is \anlast{continuous and} uniformly bounded, $\chi$ is a nonnegative constant,
and $n_B \in L^\infty(Q)$ with $0 \leq n_B \leq 1$ a.e. in $Q$.
Then, problem \eqref{eq:1:abs:tumor}--\eqref{eq:4:abs:tumor} with $j=2$
admits a weak solution $(\ph, \mu, n)$ such that
\begin{align*}
	& \ph \in \H1 \Vp  \cap \L\infty H \cap \L2 V,
	\\
	& \ph \in L^\infty(Q), \quad |\ph(x,t)| <1 \quad\text{ for a.a. } (x,t) \in Q,
	\\
	& \mu \in \L p  V \cap \Ls2 V \quad \forall p\in (1,2) \quad  \forall \sigma \in (0,T),
    \\
    &\nabla \mu \in \L2 H,
    \\
    & n \in \H1 H \cap \L\infty V \cap \L2 W,
    \\
     & n \in L^\infty(Q), \quad 0 \leq n(x,t)\leq  1 \quad\text{ for a.a. } (x,t) \in Q,
\end{align*}
where
\begin{align*}
		\mu=  a \ph - K * \ph + F'(\ph) - \chi n, \quad & \text{$a.e.$ in $Q$},
\end{align*}
and the identities
\begin{align*}
	& \<\dt \ph ,v>_{V}
	+ \iO \nabla \mu \cdot \nabla v = \iO S(\ph,n) v,
	\\
	&\iO \dt n \, v
	+  \iO \nabla n \cdot \nabla v
	+ \iO \h(\ph) n v
	= {\cal B} \iO (n_B- n) v,
\end{align*}
are satisfied for every test function $ v\in V$, and almost everywhere in $(0,T)$.
Moreover, it holds that
\begin{align*}
	\ph(0)=\ph_0, \quad n(0)=n_0, \quad \text{a.e. in } \Omega.
\end{align*}
\end{theorem}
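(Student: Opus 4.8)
The plan is to derive Theorem~\ref{THM:TUMOR:NONLOCAL} (and, with obvious modifications, Theorem~\ref{THM:TUMOR:LOCAL}) from the Cahn--Hilliard existence theory of Section~\ref{existproof} by decoupling the two equations and reconnecting them through a fixed point. Since \eqref{eq:2:abs:tumor} is \emph{linear} in $n$ once $\ph$ is frozen inside the coefficient $\h(\ph)$, I would introduce the map $\mathcal T:\bar\ph\mapsto\ph$ on the closed convex set $\mathcal K:=\{\bar\ph\in\L2 H:\ -1\le\bar\ph\le1\ \text{a.e.}\}$ as follows. Given $\bar\ph\in\mathcal K$, first solve
\[
\dt n-\Delta n+({\cal B}+{\cal C}\,\h(\bar\ph))\,n={\cal B}\,n_B,\qquad \dn n=0,\qquad n(0)=n_0;
\]
classical linear parabolic theory furnishes a unique $n\in\H1 H\cap\L\infty V\cap\L2 W$, while testing by the negative part of $n$ and by $(n-1)_+$ (and using $0\le n_0\le1$, $0\le n_B\le1$, $\h\ge0$) yields the pointwise bounds $0\le n\le1$ a.e.\ in $Q$. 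Feeding this $n$ into \eqref{eq:1:abs:tumor}--\eqref{eq:3:abs:tumor} with $j=2$ then produces a pair $(\ph,\mu)$, and I set $\mathcal T(\bar\ph):=\ph$; a fixed point $\ph=\mathcal T(\ph)$, together with its $n$ and $\mu$, is precisely a weak solution.

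The Cahn--Hilliard subsolve is where Section~\ref{existproof} enters. With $n$ regarded as a datum, the source reads $S(\ph,n)=-m\ph+(n-\delta_n)_+\h(\ph)$, which has the structure \eqref{source:splitting} with constant $m>0$ (so $m_\Omega=m$) and bounded reaction $h=(n-\delta_n)_+\h(\ph)$, while $-\chi n$ plays the role of the external field $g$. The a priori bound $0\le n\le1$ gives $|h|\le(1-\delta_n)\norma{\h}_\infty$, so the compatibility \eqref{source:splitting3} is inherited as soon as this constant is below $m_\Omega$, and \ref{ass:S} holds; the $(x,t)$-dependence of $(n-\delta_n)_+$ enters the estimates only through this uniform bound. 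Splitting $g=g_0+\tilde g$ with $g_0:=-\chi n_0$ and $\tilde g:=-\chi(n-n_0)$, the regularity of $n$ places $g$ in the class prescribed by \ref{ass:g} and \eqref{ass:g_nloc}, so for the novel pure-phase datum $\ph_0\equiv-1$ I may reproduce \emph{verbatim} the mean-value analysis of Lemma~\ref{lem:mean}, the refined inequality~\ref{PROP:MZ:sharp}, and the energy estimate leading to \eqref{en:est:nonlocal}; when instead $(\ph_0)_\Omega\in(-1,1)$ the classical approach applies. In either case one obtains the asserted regularity of $(\ph,\mu)$ with bounds \emph{uniform over $\mathcal K$}.

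Closing the fixed point rests on compactness. The estimates of Section~\ref{existproof} together with the nutrient bounds are uniform on $\mathcal K$, so $\mathcal T(\mathcal K)$ is bounded in $\H1\Vp\cap\L\infty H\cap\L2 V$, hence relatively compact in $\L2 H$ by Aubin--Lions; since the image obeys $-1\le\ph\le1$, we have $\mathcal T(\mathcal K)\subset\mathcal K$, and Schauder's theorem applies once $\mathcal T$ is continuous. Continuity is immediate when $\h$ is Lipschitz, for then \eqref{source:Lipcont} holds and the stability of the linear nutrient equation together with the continuous dependence of Theorem~\ref{THM:UNI:NONLOCAL} gives it; for a merely bounded $\h$ one approximates $\h$ by Lipschitz truncations and passes to the limit with the same uniform estimates. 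The \textbf{main obstacle} is the verification that $g=-\chi n$ meets the \emph{initial-time regularity} in \eqref{ass:g_nloc}: unlike \eqref{ass:g_loc}, the nonlocal assumption controls $\nabla\tilde g$ (not merely $\tilde g$) and demands $g_0\in W$, which is exactly why the nutrient datum is taken in $W$ rather than only in $V$. Parabolic smoothing then yields $g_0=-\chi n_0\in W$ and the rate $\norma{\nabla(n(t)-n_0)}\le C\,t^{1/2}$ as $t\searrow0$; although this provides only the exponent $\tfrac12$, the bootstrap underlying \eqref{est:alpha:nonlocal}---driven by the boundedness of $S$ and the integration-by-parts handling of $g_0\in W$---still propagates the detachment of the spatial mean of $\ph$ from $-1$ up to any exponent below $1$, so the full range $p\in(1,2)$ for $\mu$ is recovered. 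Quantifying this initial-time rate and reconciling it with the bootstrap is the delicate point; the remaining passages merely transcribe those of Section~\ref{existproof}.
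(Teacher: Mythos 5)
Your route is genuinely different from the paper's. The paper never decouples the system: it runs a Faedo--Galerkin discretization (parameter $k$) of the \emph{coupled} problem on top of the same $\lambda$-shift of the initial datum, derives the nutrient bounds ($0\le n_k\le1$, $\norma{n_k}_{\H1 H\cap\L\infty V\cap\L2 W}\le C$, and small-time rates) at the discrete level, observes that $g=-\chi n_k$ then plays the role required by \ref{ass:g}/\eqref{ass:g_nloc} in the machinery of Section~\ref{existproof}, and concludes by compactness, passing to the limit first as $k\to\infty$ and then as $\lambda\to0$. What this buys is that no continuity or uniqueness of any solution map is ever needed, so a merely bounded continuous $\h$ is handled directly. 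Your Schauder scheme instead reuses Theorem~\ref{THM:EX:NONLOCAL} as a module, and your handling of the delicate initial-time rate is in fact \emph{more} careful than the paper's sketch: the paper asserts $t^\alpha$-rates for $n_k$ for every $\alpha\in[0,1)$, whereas with forcing merely bounded in $H$ one only gets $\norma{\nabla(n(t)-n_0)}\le Ct^{1/2}$, as you say; your claim that this still drives the bootstrap to every $\alpha<1$ is correct, since after Young's inequality the $g$-term contributes $O(t^2)$ to $\norma{\ph_\lambda(t)-\ph_{0,\lambda}}^2$, leaving the iteration $\alpha\mapsto\min\{1,(1+\alpha)/2\}$ behind \eqref{est:alpha:nonlocal} unaffected. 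You are also right that \eqref{ass:g_nloc} fails for $\alpha>1/2$, so Theorem~\ref{THM:EX:NONLOCAL} cannot be cited as a black box and the estimates must be rerun.

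Two steps need more than you provide. First, continuity of $\mathcal{T}$: Theorem~\ref{THM:UNI:NONLOCAL} compares two solutions with the \emph{same} $g$ and the same $h$, i.e., it is stability with respect to the initial datum only. In your scheme, $\bar\ph_1\neq\bar\ph_2$ produce $n_1\neq n_2$, which perturb both the source term $(n-\d_n)_+\h(\ph)$ and the potential through $g=-\chi n$; you need a continuous-dependence estimate with respect to these perturbations (derivable by the same $\NN(\ph-\ph_\Omega)$ test, but it is a new estimate, not a citation). Note also that freezing $n(x,t)$ makes $h$ explicitly time-dependent, a mild extension of \ref{ass:S} that is harmless only because the estimates use the uniform bound $\overline h$. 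Second, when $\h$ is only bounded and continuous, $\mathcal{T}$ need not be single-valued (uniqueness requires \eqref{source:Lipcont}); your Lipschitz-approximation fallback then replaces Schauder by a compactness limit in a family of fixed points, at which point you have essentially rebuilt the paper's Galerkin-compactness argument. Neither issue is fatal, but both must be written out for the proof to close.
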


\begin{remark}
If \anlastr{the function $\h$ does not depend on the phase variable $\ph$}, then continuous dependence estimates can be proven
arguing along the lines of Theorem \ref{THM:UNI:LOCAL} and Theorem \ref{THM:UNI:NONLOCAL}.
In this case, both the local and the nonlocal problem are well posed.
\end{remark}

\begin{proof}
We just give a sketch of both the proofs since they are natural extensions of the previous ones with the choice $g = -\chi n$.

The difference is that here we do not know, a priori, whether $g= -\chi n $ fulfills \ref{ass:g} or not. However, a similar parabolic-type estimate as  \eqref{est:alpha:local} (\eqref{est:alpha:nonlocal} resp.) can be derived for $n$ and the strategy of the previous proof can be carried out with minor modifications.
Rigorously speaking, one should introduce a Faedo--Galerkin scheme, let $k$ denote the discretization parameter, for the system \eqref{eq:1:abs:tumor}--\eqref{eq:4:abs:tumor} along with the same $\l$-approximation introduced before.
Then, the discretized version of \eqref{eq:2:abs:tumor} can be tested by $n_k$. This gives
\begin{align*}
	\frac {1}2 \frac d{dt} \norma{n_k}^2 + \norma{\nabla n_k}^2 \leq C (\norma{n_k}^2 +1),
\end{align*}
so that
\begin{align*}
	\norma{n_k}_{\L\infty H \cap \L2 V} \leq C,
\end{align*}
whence, as before, we deduce
\begin{align*}
	\forall \,\alpha \in [0,1),
	\quad
	\exists\, C_\alpha>0:
	\quad
	\norma{n_k}_{L^\infty (0,t;H)}
	\leq
	C_\alpha t^{\alpha}
	\quad
	\forall\, t \in [0,T],
\end{align*}
with $C_\alpha>0$ that are independent of $k$.
Testing then by $\dt n_k$ and recalling that $n_0 \in V $, we find
\begin{align*}
	\norma{n_k}_{\H1  H \cap \L\infty V \cap \L2 W} \leq C,
\end{align*}
which, by the same argument, yields
\begin{align*}
	\forall \,\alpha \in [0,1),
	\quad
	\exists\, C_\alpha>0:
	\quad
	\norma{n_k}_{L^\infty (0,t;V)}
	\leq
	C_\alpha t^{\alpha},
	\quad
	\forall\, t \in [0,T],
\end{align*}
matching finally \ref{ass:g}.
The rest of the proof goes as in Section~\ref{existproof}.

Concerning the minimum and maximum principles for $n$, one can argue as done in \cite{SS}.
The final step consists in pass to the limit first as $k\to \infty$ and then as $\l \to 0$, along suitable subsequences.
\end{proof}

\section{Final comments and future work}
\label{final}
The present results are a first step towards a more general characterization of source terms which allow a pure initial datum.
Another interesting, and perhaps nontrivial, issue is the extension of our results to other models. For instance, Cahn--Hilliard--Navier--Stokes
system with sources (see \cite{HW} and references therein), fractional Cahn--Hilliard equations with sources (see, for instance, \cite{GGG2,GP} for the conserved case), or multi-component Cahn--Hilliard equations with sources (see \cite{DPG}, see also \cite{GGPS} for the general conserved case).

\appendix
\section{Appendix}

\begin{proof}[Proof of Proposition~\ref{prop:pot}]
Property~\ref{PROP:MZ:std} follows from \cite[Sec.~5]{GMS09} and \cite{MZ}.
Let us focus only on the proof of property \ref{PROP:MZ:sharp}.
Since $\widehat\beta$ is strictly convex with $\widehat\beta(0)=0$,
one has that $\beta$ is increasing, $\beta(0)=0$, with $\beta<0$ in $(-1,0)$,
and $\beta>0$ in $(0,1)$.

Let $\delta\in(0,1)$ be fixed, and let
$r\in(-1,1)$ and $r_0\in(-1,-1+\delta)$ as in property \ref{PROP:MZ:sharp}.
We define now
\[
  \underline r:=\frac{r_0-1}{2}=-1+\frac12(r_0+1),\qquad
  \overline r:=-1+\frac1\delta(r_0+1),
\]
so that it holds
\[
  -1<\underline r < r_0 <\overline r <0, \qquad
  \beta(\underline r), \beta(r_0), \beta(\overline r)<0.
\]
Let us consider the quantity
\[
  \beta(r)(r-r_0)
\]
and estimate it from below in the four cases $r\in(-1,\underline r)$,
$r\in[\underline r, \overline r]$, $r\in(\overline r,0]$, and $r\in(0,1)$.
\begin{itemize}
  \item $r\in(-1,\underline r)$: one has that
  \[
   \beta(r)(r-r_0)=  |\beta(r)||r-r_0|= (r_0-r)|\beta(r)|
   \geq(r_0-\underline r)|\beta(r)|
   =\frac{r_0+1}2|\beta(r)|.
  \]
  \item $r\in[\underline r, \overline r]$: one has that
  \begin{align*}
  \beta(r)(r-r_0)&\geq-|r-r_0|\max_{x\in[\underline r, \overline r]}|\beta(x)|
  \geq-(\overline r-\underline r)\max_{x\in[\underline r, \overline r]}|\beta(x)|\\
  &\geq\frac{r_0+1}2|\beta(r)| - \left[\frac{r_0+1}{2}+(\overline r-\underline r)\right]
  \max_{x\in[\underline r, \overline r]}|\beta(x)|\\
  &=\frac{r_0+1}2|\beta(r)|-
  \luca{\frac{1}{\delta}}(r_0+1)|\beta(\underline r)|.
  \end{align*}
  \item $r\in(\ov r, 0]$: one has that
  \begin{align*}
    \beta(r)(r-r_0)&\geq-|r-r_0||\beta(r)|=-(r-r_0)|\beta(r)|=r_0|\beta(r)| - r|\beta(r)|\\
    &=(r_0+1)|\beta(r)| - (r+1)|\beta(r)|\\
    &\geq\frac{r_0+1}2|\beta(r)|-(r+1)\max_{x\in[\overline r, 0]}|\beta(x)|\\
    &=\frac{r_0+1}2|\beta(r)| - (r+1)|\beta(\overline r)|
    \geq\frac{r_0+1}2|\beta(r)| - (r+1)|\beta(\underline r)|.
  \end{align*}
  \item $r\in(0,1)$: one has that
  \[
  \beta(r)(r-r_0)=|\beta(r)|(r-r_0)\geq-r_0|\beta(r)|\geq\frac{1-\delta}\delta(r_0+1)|\beta(r)|,
  \]
  where we have used the fact that $r_0\in(-1,-1+\delta)$ implies
  $-r_0>\frac{1-\delta}\delta(r_0+1)$.
\end{itemize}
Putting everything together, we obtain
\[
  \min\left\{\frac12, \frac{1-\delta}{\delta}\right\}(r_0+1)|\beta(r)|
  \leq \beta(r)(r-r_0)
  +\left[(r+1)
  +\luca{\frac{1}{\delta}}(r_0+1)\right]|\beta(\underline r)|
\]
and the thesis follows.
\end{proof}

\medskip
\noindent
{\bf Acknowledgements.} \mod{The authors thank the reviewers for their \anlast{valuable comments and suggestions}}.
The authors are members of Gruppo Nazionale per l'Analisi Ma\-te\-ma\-ti\-ca, la Probabilit\`{a} e le loro Applicazioni (GNAMPA), Istituto Nazionale di Alta Matematica (INdAM).
Besides, this research has been performed in the framework of the MIUR-PRIN Grant 2020F3NCPX ``Mathematics for industry 4.0 (Math4I4)".
AS gratefully acknowledge some support from the Alexander von Humboldt Foundation.
The present research has been supported by MUR, grant Dipartimento di Eccellenza 2023-2027.

\footnotesize


\begin{thebibliography}{10}

\bibitem{AH} H. Abels and C. Hurm, Strong nonlocal-to-local convergence of the Cahn-Hilliard equation and its operator.
{\em J. Differential Equations}, {\bf 402} (2024), 593--624.

\bibitem{ALL}
A. Agosti, A. G. Lucifero and S. Luzzi.
An image-informed Cahn--Hilliard Keller--Segel multiphase field model for tumor growth with angiogenesis.
{\it Appl. Math. Comput.}, {\bf 445} (2023), Paper No. 127834, 33 pp.

\bibitem{AS}
A. Agosti and A.~Signori,
Analysis of a multi-species Cahn–Hilliard–Keller–Segel tumor growth model with chemotaxis and angiogenesis.
{\it J. Differential Equations}, {\bf 403} (2024), 308-367.

\bibitem{Bar}
V. Barbu,
``Nonlinear semigroups and differential equations in Banach spaces",
Noordhoff International Publishing, Leyden, 1976.

\bibitem{BEG1}
A. Bertozzi, S. Esedo\={g}lu, A. Gillette,
Inpainting of binary images using the Cahn--Hilliard equation.
{\em IEEE Trans. Image Process.}, {\bf 16} 285--291, 2007.

\bibitem{BEG2}
A. Bertozzi, S. Esedo\={g}lu, A. Gillette,
Analysis of a two-scale Cahn--Hilliard model for binary image inpainting.
{\em Multiscale Model. Simul.}, {\bf 6}, 913--936, 2007.

\bibitem{Boe}
S. Boeynaems et al.,
Phase Separation in Biology and Disease; Current Perspectives and Open Questions.
{\em J. Mol. Biol.}, {\bf 435}, 167971, 2023.


\bibitem{CH1}
J.W. Cahn and J.E. Hilliard, Free Energy of a Nonuniform System. I. Interfacial Free Energy.
{\em J. Chem. Phys.}, {\bf 28}, 258--267, 1958.

\bibitem{CH2}
J.W. Cahn and J.E. Hilliard, On spinodal decomposition.
{\em Acta Metall.}, {\bf 9}, 795--801, 1961.

\bibitem{CFM}
L. Cherfils, H. Fakih and A. Miranville,
On the Bertozzi–Esedoglu–Gillette–Cahn–Hilliard Equation with Logarithmic Nonlinear Terms.
{\em SIAM J. Imaging Sciences}, {\bf 8}, 1123--1140, 2015.

\bibitem{CMZ}
L. Cherfils, A. Miranville and S. Zelik,
The Cahn--Hilliard equation with logarithmic potentials.
{\em Milan J. Math.}, {\bf 79}, 561--596, 2011.

\bibitem{CGH}
P. Colli, G. Gilardi and D. Hilhorst,
On a Cahn--Hilliard type phase field system related to tumor growth. {\em Discrete Contin. Dyn. Syst.}, {\bf 35}, 2423--2442, 2015.

\bibitem{CL} V. Cristini and J. Lowengrub,
 Multiscale Modeling of Cancer: An Integrated Experimental and Mathematical
  Modeling Approach. Cambridge University Press, Leiden, 2010.

\bibitem{DRST}
E. Davoli, H. Ranetbauer, L. Scarpa and L. Trussardi,
\newblock Degenerate nonlocal {C}ahn-{H}illiard equations: well-posedness,
  regularity and local asymptotics.
\newblock {\em Ann. Inst. H. Poincar\'{e} C Anal. Non Lin\'{e}aire},
{\bf 37}, 627--651, 2020.

\bibitem{DST1}
E. Davoli, L. Scarpa and L. Trussardi,
Nonlocal-to-local convergence of Cahn--Hilliard
equations: Neumann boundary conditions and viscosity terms.
{\em Arch. Ration. Mech. Anal.}, {\bf 239}, 117--149, 2021.

\bibitem{DST2}
E. Davoli, L. Scarpa and L. Trussardi,
Local asymptotics for nonlocal convective
Cahn--Hilliard equations with $W^{1,1}$ kernel and singular potential.
{\em J. Differential Equations}, {\bf 289}, 35--58, 2021.

\bibitem{DPG} A. Di Primio and M. Grasselli, On a system of coupled \rev {Cahn--Hilliard} equations.
{\em Nonlinear Anal. Real World Appl.}, {\bf 67} (2022), Paper No. 103601, 33 pp.

\sub{
\bibitem{Dett}
A. Debussche and L. Dettori,
On the Cahn--Hilliard equation with a logarithmic free energy.
{\it  Nonlinear Anal.}, {\bf 24} (1995), 1491--1514.
}

\bibitem{Dol}
E. Dolgin\sub{,}  What lava lamps and vinaigrette can teach us about cell biology. {\em Nat.}, {\bf 555}, 300--302, 2018.

\bibitem{DG}
RY. Dong and S. Granick, Reincarnations of the phase separation problem. {\em Nat Commun}, {\bf 12}, 911, 2021.

\bibitem{EG}
C.M. Elliott and H. Garcke, On the Cahn--Hilliard Equation with Degenerate Mobility.
{\em SIAM J. Math. Anal.}, {\bf 27}, 403--423, 1996.

\bibitem{PFSSSD} C. \sub{Fern\'andez-Rico}, T. Sai, A. Sicher, R.W. Style and E.R. Dufresne,
Putting the Squeeze on Phase Separation, {\em JACS Au}, {\bf 2}, 66--73, 2022.

\bibitem{FGG}
S. Frigeri, C.G. Gal and M. Grasselli,
Regularity results for the nonlocal Cahn--Hilliard equation with singular potential and degenerate mobility.
{\em J. Differential Equations}, {\bf 287}, 295--328, 2021.

\bibitem{FG} S. Frigeri and M. Grasselli, Nonlocal \rev {Cahn--Hilliard-Navier--Stokes} systems with singular potentials.
{\em Dyn. Partial Differ. Equ.}, {\bf 9}, 273--304, 2012.

\bibitem{FLRS}
S. Frigeri, K.F. Lam, E. Rocca and G. Schimperna,
On a multi-species Cahn--Hilliard--Darcy tumor growth model with singular potentials.
{\em Commun. Math. Sci.}, {\bf 16}, 821--856, 2018.

\bibitem{FLS}
S. Frigeri, K.F. Lam and A. Signori,
Strong well-posedness and inverse identification problem of a non-local phase field tumor model with degenerate mobilities.
{\em European J. Appl. Math.}, {\bf 33}, 267--308, 2022.

\bibitem{GGG1}
\rev{C.G.} Gal, A. Giorgini and M. Grasselli,
The nonlocal Cahn--Hilliard equation with singular potential: well-posedness, regularity and strict separation property.
{\em J. Differential Equations}, {\bf 263},  5253--5297, 2017.

\bibitem{GGG2}
\rev{C.G.} Gal, A. Giorgini and M. Grasselli,
The separation property for 2D Cahn--Hilliard equations:
local, nonlocal and fractional energy cases.
{\em Discrete Contin. Dyn. Syst.}, {\bf 43}, 2270--2304, 2023.

\bibitem{GGPS} C.G. Gal, M. Grasselli, A. Poiatti, and J.L. Shomberg, Multi-component \rev {Cahn--Hilliard} systems with singular potentials: theoretical results.
{\em Appl. Math. Optim.}, {\bf 88} (2023), Paper No. 73, 46 pp.

\bibitem{GP} C.G. Gal and A. Poiatti. Unified framework for the separation property in binary phase segregation
processes with singular entropy densities. {\em European J. Appl. Math.}, \mod{{\bf 36}, 40--67, 2025}.

\bibitem{GLSS}
H. Garcke, K.F. Lam, E. Sitka and V. Styles,
A {C}ahn--{H}illiard--{D}arcy model for tumour growth with chemotaxis and active transport.
{\em Math. Models Methods in Appl. Sci.}, {\bf 26}, 1095--1148, 2016.

\bibitem{GFGN}
K. Gasior, M.G. Forest, A.S. Gladfelter and J.M. Newby,
Modeling the mechanisms by which coexisting biomolecular RNA-Protein condensates form.
{\em Bull Math Biol.}, {\bf 82}, 153, 2020.

\bibitem{GFGNrev}
K. Gasior, J. Zhao, G. McLaughlin, M.G. Forest, A.S. Gladfelter and J. Newby,
Partial demixing of RNA-protein complexes leads to intradroplet patterning in phase-separated biological condensates.
{\em Phys Rev E.}, {\bf 99}, 012411, 2019.

\bibitem{GL1}
G. Giacomin and J.L. Lebowitz, Phase segregation dynamics in particle systems with long
range interactions. I. Macroscopic limits. {\em J. Statist. Phys.}, {\bf 87}, 37--61, 1997.

\bibitem{GL2}
G. Giacomin and J.L. Lebowitz, Phase segregation dynamics in particle systems with
long range interactions II: interface motion.
{\em SIAM Journal on Applied Mathematics}, {\bf 58}, 1707--1729, 1998.

\bibitem{GMS09}
	 G. Gilardi, A. Miranville and G. Schimperna,
	 On the {C}ahn--{H}illiard equation with irregular potentials
	and dynamic boundary conditions.
	{\em Commun. Pure. Appl. Anal.}, {\bf 8}, 881--912, 2009.

\bibitem{G}
A. Giorgini,
On the separation property and the global attractor for the nonlocal
Cahn--Hilliard equation in three dimensions. {\em J. Evol. Equ.}, {\bf 24}, 21, 2024\mod{.}

\bibitem{GGM}
A. Giorgini, M. Grasselli and A. Miranville,
The Cahn--Hilliard--Oono equation with singular potential.
{\em Math. Models Methods Appl. Sci.}, {\bf 27},
2485--2510, 2017.

\bibitem{GGW}
A. Giorgini, M. Grasselli and H. Wu,
The Cahn--Hilliard--Hele--Shaw system with singular potential.
{\em Ann. Inst. H. Poincar\'e Anal. Non Lin\'eaire}, {\bf 35}, 1079--1118, 2018.

\bibitem{GLRS}
A. Giorgini, K.F. Lam, E. Rocca and G. Schimperna,
On the Existence of Strong Solutions to the Cahn--Hilliard--Darcy system with mass source.
{\em SIAM J. Math. Anal.}, {\bf 54}, 737--767, 2022.

\bibitem{GDM}
S.C. Glotzer, E.A. Di Marzio and M. Muthukumar,
Reaction-Controlled Morphology of Phase-Separating Mixtures.
{\em Phys. Rev. Lett.}, {\bf 74}, 2034, 1995.

\bibitem{GSS}
M. Grasselli, L. Scarpa and A. Signori,
On a phase field model for RNA-Protein dynamics.
{\em SIAM J. Math. Anal.}, {\bf 55}, 405--457, 2023.

\bibitem{HW} J. He and H. Wu, On a Navier–Stokes–Cahn–Hilliard system for viscous
incompressible two-phase flows with chemotaxis, active transport and reaction.
{\em Math. Ann.}, {\bf 389}, 2193--2257, 2024.

\bibitem{HWJ}
A.A. Hyman, C.A. Weber and F. J\"{u}licher, Liquid-liquid phase separation
in biology. Annual review of cell and developmental biology {\bf 30}, 39--58, 2014.

\bibitem{IM}
A. Iuorio and S. Melchionna,
Long-time behavior of a nonlocal Cahn--Hilliard equation with reaction.
{\em Discrete Contin. Dyn. Syst.}, {\bf 38}, 3765--3788, 2018.

\bibitem{Jiangetal} D. Jiang, M. Azaiez, A. Miranville, C. Xu and H Yao, On the long-time behavior of the continuous and discrete solutions of a
nonlocal Cahn--Hilliard type inpainting model, {\em Math. Comput. Simulation}, https://doi.org/10.1016/j.matcom.2024.05.023

\bibitem{KNP} N. Kenmochi, M. Niezg\'{o}dka and I. \rev{Paw\lacc ow},
Subdifferential operator approach to the \rev {Cahn--Hilliard} equation with constraint.
{\em J. Differential Equations}, {\bf 117}, 320--356, 1995.

\bibitem{KS} E. Khain and L.M. Sander, Generalized \rev {Cahn--Hilliard} equation for biological applications.
{\em Phys. Rev. E}, {\bf 77}, 051129, 2008.

\bibitem{Lam}
K.F. Lam,
Global and exponential attractors for a Cahn--Hilliard equation with logarithmic potentials and mass source.
{\em J. Differential Equations}, {\bf 312}, 237--275, 2022.

\bibitem{Lahaetal} S. Laha, J. Bauermann, F. J\"{u}licher, T.C.T. Michaels and C.A. Weber,
\mod{{\em Chemical reactions regulated by phase-separated condensates}\an{.}
{\em Phys. Rev. Research},  {\bf 6}, 043092, 2024.}


\bibitem{MR} S. Melchionna and E. Rocca, On a nonlocal \rev {Cahn--Hilliard} equation with a reaction term.
{\em Adv. Math. Sci. Appl.}, {\bf 24}, 461--497, 2014.

\bibitem{Mbook}
A. Miranville, The Cahn--Hilliard Equation: Recent Advances and Applications.
CBMS-NSF Regional Conf. Ser. in Appl. Math., SIAM, Philadelphia, PA, 2019.

\bibitem{Mir}
A. Miranville, The Cahn--Hilliard equation with a nonlinear source term.
{\em J. Differential Equations}, {\bf 294}, 88--117, 2021.


\bibitem{MZ}
A. Miranville and S. Zelik,
Robust exponential attractors for Cahn--Hilliard type equations with singular potentials.
{\em Math. Methods Appl. Sci.}, {\bf 27}, 545--582, 2004.

\bibitem{P}
A. Poiatti, The 3D strict separation property for the nonlocal Cahn--Hilliard
equation with singular potential.
{\em Anal. PDE}, \mod{{\bf 18}, 109--139, 2025.}


\bibitem{RSchS}
E. Rocca, G. Schimperna and A. Signori,
On a Cahn--Hilliard--Keller--Segel model with generalized logistic source describing tumor growth.
{\em J. Differential Equations}, {\bf 343}, 530--578, 2023.

\bibitem{SS}
L. Scarpa and A. Signori,
On a class of non-local phase-field models for tumor growth with possibly singular potentials, chemotaxis, and active transport.
{\em Nonlinearity},  {\bf 34}, 3199--3250, 2021.



\end{thebibliography}
\end{document}